%%%Communications in Mathematical Sciences Template
\pdfoutput=1
          \documentclass{cmslatex}
          % Remove any % below to load the required packages
\usepackage[paperwidth=7in, paperheight=10in, margin=.875in]{geometry}
\usepackage{pdfpages}
\usepackage{amssymb}
\usepackage{empheq}
\usepackage{cases}
\usepackage{amsmath}
\usepackage{caption,lipsum}
\usepackage{stmaryrd}
\usepackage{graphicx,epsfig}
\usepackage{epstopdf}
\usepackage{tabularx}
\usepackage{color}
\usepackage{empheq,float}
\usepackage{tikz}
\usetikzlibrary{matrix}
\DeclareGraphicsExtensions{-eps-converted-to.pdf}
\input psfig.sty
          %\usepackage{graphics}
          % etc
\def\CC{{\mathbb C}}% complex numbers
\def\RR{{\mathbb R}}% real numbers
% nonnegative integers
\def\ZZ{{\mathbb Z}}% integers
\def\TT{{\mathbb T}}% torus
% Schwartz space
\def\calO{\mathcal O}

\newcommand{\fe}{\mathrm{e}}
\def\ds{\displaystyle}

\def\eps{\varepsilon}

\def\pa{\partial}

\newcommand{\be}{\begin{equation}}
\newcommand{\ee}{\end{equation}}
\newcommand{\ba}{\begin{array}}
\newcommand{\ea}{\end{array}}
\newcommand{\bea}{\begin{eqnarray}}
\newcommand{\eea}{\end{eqnarray}}
\newcommand{\beas}{\begin{eqnarray*}}
\newcommand{\eeas}{\end{eqnarray*}}

\numberwithin{equation}{section}
          % Put your favorite macros here. We cannot guess what your macros are
          % -they all need to be included here!
          \sloppy

\thinmuskip = 0.5\thinmuskip \medmuskip = 0.5\medmuskip
\thickmuskip = 0.5\thickmuskip \arraycolsep = 0.3\arraycolsep

           \begin{document}
          \title{Uniformly accurate numerical schemes for the nonlinear Dirac equation in the nonrelativistic limit regime}

          %For each author, make a block with the following four macros:

          \author{Mohammed Lemou\thanks{CNRS, Universit\'{e} de Rennes 1, IRMAR and INRIA-Rennes, Campus de Beaulieu, 35042 Rennes Cedex, France.({\tt mohammed.lemou@univ-rennes1.fr})}
         \and Florian M\'{e}hats \thanks{Universit\'{e} de Rennes 1, IRMAR and INRIA-Rennes, Campus de Beaulieu, 35042 Rennes Cedex, France. ({\tt florian.mehats@univ-rennes1.fr})}
         \and Xiaofei Zhao \thanks{Universit\'{e} de Rennes 1, IRMAR, Campus de Beaulieu, 35042 Rennes Cedex, France. ({\tt zhxfnus@gmail.com})}}

          %Use \thanks statements for acknowedgements of grants and
          %support. They will appear below all the authors' addresses, so be
          %specific about which author is thanking whom:

          %\thanks{}

          % Use the standard latex environments for theorems, etc. Here is one
          % possible method of declaring them: It numbers all results by the
          % section, and uses a common numbering system for the different
          % environmentts.

          %\date{Received date / Revised version date}
          % The correct dates will be entered by the editor

         \pagestyle{myheadings} \markboth{UA schemes for the nonlinear Dirac equation}{M. LEMOU, F. M\'{E}HATS AND X. ZHAO} \maketitle

          \begin{abstract}
          We apply the two-scale formulation approach to propose uniformly accurate (UA) schemes for solving the nonlinear Dirac equation in the nonrelativistic limit regime. The nonlinear Dirac equation involves two small scales $\eps$ and $\eps^2$ with $\eps\to0$ in the nonrelativistic limit regime. The small parameter causes high oscillations in time which brings severe numerical burden for classical numerical methods. We transform our original problem as a two-scale formulation and present a general strategy to tackle a class of highly oscillatory problems involving the two small scales $\eps$ and $\eps^2$. Suitable initial data for the two-scale formulation is derived to bound the time derivatives of the augmented solution. Numerical schemes with uniform (with respect to $\eps\in (0,1]$) spectral accuracy in space and uniform first order or second order accuracy in time are proposed. Numerical experiments are done to confirm the UA property.
          \end{abstract}
\begin{keywords}
nonlinear Dirac equation, nonrelativistic limit, highly oscillatory equations, uniform accuracy, two-scale formulation
\end{keywords}

 \begin{AMS}
 35Q55,  65M12, 74Q10
\end{AMS}
         \section{Introduction}
\label{sec:intro}
%%%%%%%%%%%%%%%%%%%%%%%%%
The nonlinear Dirac equation has been widely considered in many physical and mathematical areas, such as the electron self-interacting \cite{ele}, the gravity theory \cite{grav}, and recent studies in material graphene and Bose-Einstein condensates \cite{Ablowitz,Weistein,BEC}. In this paper, we consider the nonlinear reduced Dirac equation
\begin{equation}
\label{nlsw}
i\pa_t \Phi^\eps=-\frac{i}{\eps}\alpha \pa_x \Phi^\eps+\frac{1}{\eps^2}\beta \Phi^\eps+\left[V_e(t)+V_m(t)\alpha\right]\Phi^\eps+\lambda\left(\beta \Phi^\eps,\Phi^\eps\right)\beta \Phi^\eps,
\end{equation}
for $ t>0,\ x\in\RR$, where the unknown $\Phi^\eps=(\phi_1^\eps,\phi_2^\eps)^T=\Phi^\eps(t,x)$ is a bi-dimensional complex-valued column vector and is interpreted as the wave function subject to the initial condition
$$\Phi^\eps(0,x)=\Phi_0(x),\qquad x\in\RR.$$
$\lambda\in\RR$ denotes the coupling constant and $\eps\in(0,1]$ is a dimensionless parameter which is inversely proportional to the speed of light. 
$V_e(t)=V_e(t,x)$ and $V_m(t)=V_m(t,x)$ are two given real-valued scalar functions representing the electrical potential and magnetic potential \cite{Mauser,Jin,BL}, respectively, and the matrices $\alpha$ and $\beta$ are known as the Pauli matrices, i.e.
$$\alpha=\begin{pmatrix}0&1\\ 1&0\end{pmatrix},\qquad \beta=\begin{pmatrix}1&0\\ 0&-1\end{pmatrix}.$$
In particular, we have $(\beta \Phi^\eps,\Phi^\eps)=|\phi_1^\eps|^2-|\phi_2^\eps|^2$.
The nonlinear Dirac equation (\ref{nlsw}) has been widely considered in the literature \cite{Bao1,ESAIM,Tang3,Pecher} as a reduced mathematical model. It was originally derived in \cite{soler} as a four-component equations in 3D for describing the spinor field with nonlinear coupling. A more complicated cubic nonlinearity could be considered as in \cite{Thirring,Tang2}. We refer to \cite{Mauser,Bao1} for the nondimensionalization and the model reduction to the two-component problem (\ref{nlsw}).

The model (\ref{nlsw}) conserves the mass $M(t):=\int_{\RR}|\Phi^\eps|^2dx$ and the energy $E(t)$, provided that the two external potentials $V_e,V_m$ are independent of time,
\begin{align*}
 &\quad E(t)\\
 &:=\int_{\RR}\left(\frac{1}{\eps^2}(\Phi^\eps,\beta\Phi^\eps)-\frac{i}{\eps}(\Phi^\eps,\alpha\partial_x\Phi^\eps)+V_e|\Phi^\eps|^2+V_m(\Phi^\eps,\alpha
 \Phi^\eps)+\lambda(\beta\Phi^\eps,\Phi^\eps)|\Phi^\eps|^2\right)dx.
\end{align*}
With a fixed $\eps>0$, the well-posedness of the Cauchy problem (\ref{nlsw}) has been studied and established. We refer to \cite{ESAIM,Huh,Najman,Pecher,Bartsch} and the references therein for detailed analytical results. Various numerical methods including finite difference time domain (FDTD) methods and operator splitting methods have also been considered in \cite{Tang1,Tang2,Tang3,ESAIM,Added1} for solving the nonlinear Dirac equation in the classical regime, i.e. $\eps\approx 1$.

When $\eps\to0$, which corresponds to the speed of light going to infinity and is known as the nonrelativistic limit, the nonlinear Dirac equation (\ref{nlsw}) has been shown to converge in the energy space to the nonlinear Schr\"{o}dinger equations \cite{Mauser,Najman}. Both the analytical results and the numerical studies \cite{Mauser,Najman,Bao1,Bao2} show that the solution $\Phi^\eps$ of (\ref{nlsw}) propagates waves with wavelength $O(\eps^2)$ in time $t$ and wavelength $O(1)$ in space $x$, as $\eps\ll1$. The small temporal wavelength makes the solution highly oscillatory in time, and as a consequence it causes severe numerical burden for classical numerical discretizations. Results in \cite{Bao1,Bao0} show that, in order to capture the correct solution in the limit regime, one needs to use time step $\Delta t=O(\eps^2)$ and mesh size $\Delta x=O(\sqrt{\eps})$ for FDTD, and $\Delta t=O(\eps^2),\,\Delta x=O(1)$ for exponential integrators or time splitting spectral methods. Thus designing numerical methods that allow the use of step size independent of $\eps$ becomes a very needed and challenging issue. Recently, in study of the nonlinear Klein-Gordon equation in nonrelativistic limit regime, schemes with uniformly accurate (UA) properties have been proposed in \cite{BCZ,BDZ,cclm}. The UA schemes can use $\Delta t=O(1),\,\Delta x=O(1)$ for corrected approximations when $\eps\ll1$ and are significantly better than classical methods in the limit regime and asymptotic preserving (AP) methods in the intermediate regime, as shown in \cite{BZ,zhao}. Among the UA schemes, there are two approaches so far. One is based on the multiscale expansion of the solution \cite{BCZ,BDZ}, and the other uses the two-scale formulation approach \cite{cclm,clm}. The first approach strongly relies on the pre-knowledge of the expansion and the polynomial type nonlinearity. In fact, a multiscale approach  has already been considered for the linear Dirac in \cite{Bao2}, leading to a first order UA scheme. However, this proposed scheme and its corresponding numerical analysis become rather complicated in the nonlinear case. In contrast, the two-scale formulation approach provides a general strategy to design UA schemes for a class of oscillatory problems \cite{clm}. Nevertheless, the nonlinear Dirac equation (\ref{nlsw}) which contains two scales $\eps$ and $\eps^2$, does not belong to the type of problems in \cite{clm}, which means that the strategy in \cite{cclm,clm} cannot be directly applied. In fact, problems involving two scales are usual in collisional kinetic equations in the diffusion limit, but the presence of these two scales ($\eps$ and $\eps^2$) also appears in the context of oscillatory kinetic equations, as mentioned in \cite{clm,kinetic}. This extra scaling in the equations causes more difficulties for achieving UA property and  a suitable two-scale formulation has to be found.

In this work, we are going to propose a two-scale formulation and design UA schemes for solving the nonlinear Dirac equation in the nonrelativistic limit regime. A suitable formulation is presented on which we construct a second order in time UA scheme. Extensive numerical experiments are done to show the UA property. Extension of this strategy to the case of oscillatory kinetic equations with strong magnetic field is the subject of a forthcoming paper.

The rest of the paper is organized as follows. In Section \ref{sec:two-scale},
we introduce the two-scale formulation and construct the suitable initial data for this augmented problem. In Section \ref{sec:method},
we present the numerical schemes based on this two-scale formulation.  Numerical results are reported in
Section \ref{sec:result} and some conclusions are finally drawn in Section \ref{sec:conc}.

\section{Two scale formulation}\label{sec:two-scale}
To filter out the main oscillation in \eqref{nlsw}, let us introduce the filtered data
\begin{equation}\label{filter}
u^\eps=\begin{pmatrix}\fe^{it/\eps^2}&0\\ 0&\fe^{-it/\eps^2}\end{pmatrix}\Phi^\eps.
\end{equation}
It solves the equation
\begin{equation}\label{filtered}
\pa_t u^\eps=-\frac{1}{\eps} A(t/\eps^2)\pa_x u^\eps+F(t,t/\eps^2,u^\eps),\quad t>0,\ x\in\RR,
\end{equation}
with
$$A(\tau)= \begin{pmatrix}0&\fe^{2i\tau}\\ \fe^{-2i\tau}&0\end{pmatrix},$$
and
\begin{equation}\label{F def}
F(t,\tau,u^\eps)=-i\left[V_e(t)+V_m(t)A(\tau)\right]u^\eps-i\lambda\left(\beta u^\eps,u^\eps\right)\beta u^\eps.
\end{equation}
In this paper, we assume that $V_e$ and $V_m$ are $C^1$ functions of time.
For future convenience, we denote the nonlinearity without the magnetic potential by
$$F_e(t,u^\eps):=-i\left[V_e(t)u^\eps+\lambda\left(\beta u^\eps,u^\eps\right)\beta u^\eps\right].$$
Now, following \cite{cclm,clm}, we separate the fast time variable $\tau=t/\eps^2$ from the slow time variable $t$ in the solution $u^\eps(t,x)$ of (\ref{filtered}) and consider the two-scale formulation for the augmented unknown $U^\eps=U^\eps(t,\tau,x)$$\in\CC^2$:
\begin{equation}
\label{TS}
\pa_t U^\eps+\frac{1}{\eps^2}\pa_\tau U^\eps=-\frac{1}{\eps}A(\tau)\pa_x U^\eps+F(t,\tau,U^\eps),\quad t>0,\ \tau\in\TT,\ x\in\RR.
\end{equation}
Here $U^\eps(t,\tau,x)$ is $2\pi$-periodic in $\tau$ and $\TT:=\RR/(2\pi\ZZ)$ denotes the torus. With initial data $U^\eps(0,\tau,x)$ satisfying
$$U^\eps(0,0,x)=u^\eps(0,x)=\Phi_0(x),\quad x\in\RR,$$
the solution of the two-scale problem (\ref{TS}) enables to recover the solution of the filtered equation (\ref{filtered}) by setting
$$U^\eps\left(t,\frac{t}{\eps^2},x\right)=u^\eps(t,x),\quad t\geq0,\ x\in\RR.$$

In the two-scale formulation (\ref{TS}), $\tau$ is considered as an additional independent variable, which is periodic. The advantage one can get from this formulation is that now the initial data $U^\eps(0,\tau,x)$ is only prescribed at a single point $\tau=0$, so there is some freedom to choose the initial data. By choosing a suitable initial data, we will succeed to bound the time derivatives of $U^\eps$ uniformly with respect to $\eps$, which allows to design uniformly accurate numerical schemes. 

\subsection{A toy model}
In order to find out a suitable initial data for problems like \eqref{TS}, we propose to analyze the following simple toy model:
\begin{equation}
\label{toy}
\partial_tu+\frac{1}{\eps^2}\partial_\tau u+i\frac{a(\tau)}{\eps}u=0,\qquad \mbox{with}\quad u(0,\tau)=u_{in}(\tau).
\end{equation}
Here, $u(t,\tau)$ is a scalar unknown, $t\geq 0$ and $\tau\in \TT$. Moreover, we assume (as for the augmented problem \eqref{TS}) that only $u_{in}(0)=u_0$ is prescribed, that $a$ is a real-valued smooth periodic function and that we have the property
$$\int_0^{2\pi}a(\tau)d\tau=0.$$ Our aim is to derive expressions for $u_{in}(\tau)$ in order to ensure conditions (i) $u_{in}(0)=u_0$, and (ii) that time-derivative of $u$, up to some order $p\geq 1$, are uniformly bounded with respect to $\eps$, a property which enables to construct easily uniformly accurate numerical schemes for \eqref{toy}. 

The exact solution of \eqref{toy} is given by
$$u(t,\tau)=\fe^{-i\eps b(\tau)}\fe^{i\eps b(\tau-t/\eps^2)}u_{in}(\tau-t/\eps^2),$$
where $b(\tau)=\int_0^\tau a(s)ds$. Note that $b$ is periodic, since the average of $a$ vanishes. It is clear that
$$\frac{\partial^k u}{dt^k}(t,\tau)=\fe^{-i\eps b(\tau)}\frac{(-1)^k}{\eps^{2k}}\left.\frac{d^k}{ds^k}\left(\vphantom{\int}\fe^{i\eps b(s)}u_{in}(s)\right)\right|_{s=\tau-t/\eps^2}.$$
Therefore, in order to get uniformly bounded derivatives for $k=0,\ldots,p$, we must have
$$\frac{d^p}{ds^p}\left(\vphantom{\int}\fe^{i\eps b(s)}u_{in}(s)\right)=\mathcal O(\eps^{2p}),$$
which is satisfied as soon as
$$u_{in}(\tau)=Q_{p-1}(\tau)\fe^{-i\eps b(\tau)}+\mathcal O(\eps^{2p}).$$
Here $Q_{p-1}$ is any polynomial of degree $\leq p-1$. Due to the smoothness and periodicity of $u_{in}$, we have necessarily that $Q$ is a constant polynomial, thus
$$u_{in}(\tau)=C\fe^{-i\eps b(\tau)}+\mathcal O(\eps^{2p}).$$
Therefore, the following initial data is suitable to ensure (i) and (ii):
$$u_{in}(\tau)=u_0+u_0\sum_{k=1}^{2p-1}\frac{(-ib(\tau))^k}{k!}\,\eps^k.$$
In particular, one observes that one needs to choose the initial data as an expansion in powers $\eps$ up to the order $\eps^{2p-1}$ in order to ensure the boundedness of the time derivatives of $u$ up to the order $p$. This crucial property will guide our analysis in the next subsection. Of course, for \eqref{TS}, we do not have an exact solution, but we will obtain iteratively the expansion in powers of $\eps$ of the suitable initial data by using Chapman-Enskog techniques.

\subsection{Suitable initial data for the augmented problem (\ref{TS})}
%It is known that the non-linear Dirac equation is well posed in $C^\infty([0,T], H^1(\RR))$, see  \cite{Pelin} and the references therein.

%\subsection{Preparation of the the initial data}

In order to define the initial data $U^\eps(0,\tau)$ where here and after we omit the space variable $x$ for simplicity, let us perform {\em formally} the Chapman-Enskog expansion of $U^\eps$ \cite{cclm,clm,zhao}. To this aim, we introduce the operators $L$ and $\Pi$ for a periodic function $h(\tau):\TT\to\CC^2$ as
$$Lh:=\partial_\tau h,\quad \Pi h:=\frac{1}{2\pi}\int_0^{2\pi}h(\tau)d\tau,$$
and when $\Pi h=0$, the operator $L$ is invertible with
$$(L^{-1}h)(\tau)=(I-\Pi)\int_0^\tau h(\theta)d\theta.$$
We perform the Chapman-Enskog expansion by setting
$$U^\eps(t,\tau)=\underline U(t)+h(t,\tau),\quad \mbox{with }\underline U=\Pi U^\eps.$$
We observe that $\Pi A=0$ and write the micro-macro formulation of \eqref{TS} as
\begin{align}
\pa_t \underline U&=-\frac{1}{\eps}\Pi\left(A(\tau)\pa_x h\right)+\Pi\left(F(t,\tau,\underline U+h)\right),\label{U def}\\
\pa_t h &=-\frac{1}{\eps^2}L h-\frac{1}{\eps}A(\tau)\pa_x \underline U-\frac{1}{\eps}(I-\Pi)\left(A(\tau)\pa_x h\right)+(I-\Pi)F(t,\tau,\underline U+h).\label{h def}
\end{align}

Based on this formulation, it appears that, when $\eps\to 0$, $U^\eps(t,\tau)$ converges to $U(t)$ satisfying
\begin{equation}\label{lm1}\pa_t U=C\partial_x^2U+\Pi \left(F(t,\tau,U)\right)=F_e(t,U),
\end{equation}
where the matrix $C$ is given below by \eqref{defC}.
In particular, when $\eps$ is small, the solution $\Phi^\eps$ of the nonlinear Dirac equation (\ref{nlsw}) will formally be close to $\Phi$ defined by
$$\Phi=\begin{pmatrix}\fe^{-it/\eps^2}&0\\ 0&\fe^{it/\eps^2}\end{pmatrix}U.$$
The above limit model (\ref{lm1}) is a system of coupled nonlinear Schr\"{o}dinger equations.
When there is no magnetic potential, i.e. $V_m=0$, the convergence of the model (\ref{filtered}) to (\ref{lm1}) as $\eps\to0$ has been proved rigorously in some energy space in \cite{Najman}. 

\subsubsection{Uniform boundedness of time derivatives up to order 2}
In this subsection, we derive the expression of $U^\eps(0,\tau)$ such that the time derivatives of $U^\eps$ up to order 2 are uniformly bounded. Following the previous subsection on the toy model, we need an expansion up to the order $\eps^3$. In the following formal calculations, we thus assume that derivatives of $h$ until order 2 are bounded.

By applying the inverse of $L$ to \eqref{h def}, we get
\begin{align}
h=&-\eps L^{-1}A\partial_x\underline{U}-\eps L^{-1}(I-\Pi)A\partial_xh+\eps^2L^{-1}(I-\Pi)F(t,\tau,\underline{U}+h)-\eps^2L^{-1}\partial_th.\label{h all}
\end{align}
Then we further have
\begin{align}
\partial_xh=&-\eps L^{-1}A\partial_x^2\underline{U}-\eps L^{-1}(I-\Pi)A\partial_x^2h+\eps^2L^{-1}(I-\Pi)\partial_xF(t,\tau,\underline{U}+h)\nonumber\\
&-\eps^2L^{-1}\partial_{tx}h,\label{hx all}\\
\partial_th=&-\eps L^{-1}A\partial_{tx}\underline{U}-\eps L^{-1}(I-\Pi)A\partial_{tx}h+\eps^2L^{-1}(I-\Pi)\big[\partial_tF(t,\tau,\underline{U}+h)\nonumber\\
&+\partial_uF(t,\tau,\underline{U}+h)(\partial_t\underline{U}+\partial_th)\big]-\eps^2L^{-1}\partial_{t}^2h.\label{ht all}
\end{align}
Hence, we can expand the unknowns in powers of $\eps$ and find
\begin{align*}
h=&-\eps L^{-1}A\pa_x\underline U+\eps^2 L^{-1}(I-\Pi)AL^{-1}A\pa^2_x\underline U+\eps^2L^{-1}(I-\Pi)F(t,\tau,\underline{U})+\calO(\eps^3).
\end{align*}
Therefore,
\begin{align*}
U^\eps(0,\tau)=&\underline U-\eps L^{-1}A\pa_x\underline U+\eps^2 L^{-1}(I-\Pi)AL^{-1}A\pa^2_x\underline U\\
&+\eps^2L^{-1}(I-\Pi)F(t,\tau,\underline{U})+\calO(\eps^3).
\end{align*}
Direct computations yield
$$B(\tau):=L^{-1}A(\tau)=-\frac{i}{2}\begin{pmatrix}0&\fe^{2i\tau}\\ -\fe^{-2i\tau}&0\end{pmatrix}$$
and
\begin{equation}\label{defC}C\equiv C(\tau):=A(\tau)B(\tau)=\frac{i}{2}\begin{pmatrix}1&0\\ 0&-1\end{pmatrix}
\end{equation}
so $(I-\Pi)C(\tau)=0$ and
\begin{align*}
U^\eps(0,\tau)=\underline U(0)&-\eps B(\tau)\pa_x\underline U(0)+\eps^2L^{-1}(I-\Pi)F(0,\tau,\underline{U})+\calO(\eps^3).
\end{align*}
Now we use that $U^\eps(0,0)=\Phi_0$ to get
\begin{align}\label{U 2}
\underline U(0)&=\Phi_0+\eps B(0)\pa_x\underline U(0)-\eps^2f_0(0)+\calO(\eps^3)\nonumber\\
&=\Phi_0+\eps B(0)\pa_x\Phi_0+\eps^2 B(0)^2\pa^2_x\Phi_0-\eps^2f_0(0)+\calO(\eps^3),
\end{align}
where
$$f_0(\tau):=L^{-1}(I-\Pi)F(0,\tau,\Phi_0)=-iV_m(0)B(\tau)\Phi_0,$$
and then
\begin{align*}
U^\eps(0,\tau)
=&\Phi_0-\eps \left(B(\tau)-B(0)\right)\pa_x\Phi_0-\eps^2 \left(B(\tau)-B(0)\right)B(0)\pa^2_x\Phi_0\\
&+\eps^2(f_0(\tau)-f_0(0))+\calO(\eps^3).
\end{align*}
We thus get an expression $U^\eps_2$ for the suitable initial data  up to $O(\eps^3)$, i.e. $U^\eps(0,\tau)=U^\eps_2(\tau)+O(\eps^3)$, as follows:
\begin{align*}U_{2}^\eps(\tau):=&\Phi_0+ \frac{i\eps }{2}\begin{pmatrix}0&\fe^{2i\tau}-1\\ 1-\fe^{-2i\tau}&0\end{pmatrix}\pa_x\Phi_0+\frac{\eps^2}{4}\begin{pmatrix}1-\fe^{2i\tau}&0\\ 0&1-\fe^{-2i\tau}\end{pmatrix}\pa^2_x\Phi_0\\
&-\frac{\eps^2}{2}V_m(0)\begin{pmatrix}0&\fe^{2i\tau}-1\\ 1-\fe^{-2i\tau}&0\end{pmatrix}\Phi_0.
\end{align*}
For future use, we denote $U_2(\tau)=U_{2}^\eps(\tau)-\eps^2(f_0(\tau)-f_0(0))$, and also define
$$U_1^\eps(\tau):=\Phi_0+ \frac{i\eps }{2}\begin{pmatrix}0&\fe^{2i\tau}-1\\ 1-\fe^{-2i\tau}&0\end{pmatrix},$$
which gives $U^\eps(0,\tau)=U^\eps_1(\tau)+O(\eps^2)$.

Now we push the asymptotic expansion to the next order.
Inserting (\ref{hx all}) and (\ref{ht all}) into (\ref{h all}) and letting $t=0$, we get
\begin{align}
h(0,\tau)&=-\eps B\partial_x\underline{U}-\eps L^{-1}(I-\Pi)A\left[-\eps B\partial_x^2\underline{U}+\eps^2L^{-1}(I-\Pi)C\partial_x^3\underline{U}\right.\nonumber\\
&\quad\left.+\eps^2L^{-1}(I-\Pi)\partial_xF(0,\tau,\underline{U})\right]
+\eps^2L^{-1}(I-\Pi)F(0,\tau,\underline{U}+h)\nonumber\\
&\quad+\eps^3L^{-1}B\partial_{tx}\underline{U}+O(\eps^4)\nonumber\\
&=-\eps B\partial_x\underline{U}+\eps^2L^{-1}(I-\Pi)F(0,\tau,U_1^\eps)+\eps^3L^{-1}B\partial_{tx}\underline{U}\nonumber\\
&\quad-\eps^3 L^{-1}(I-\Pi)AL^{-1}(I-\Pi)\partial_xF(0,\tau,\underline{U})+O(\eps^4).
\label{h 3}
\end{align}
From (\ref{U def}) and (\ref{hx all}), we have
\begin{equation}\label{limit model}
\partial_t\underline{U}= C\partial_x^2\underline{U}+\Pi F(t,\tau,\underline{U})+O(\eps).
\end{equation}
Noting $F_e(t,\underline{U})=\Pi F(t,\tau,\underline{U})$, we find
\begin{align*}
h(0,\tau)=&-\eps B\partial_x\underline{U}+\eps^2L^{-1}(I-\Pi)F(0,\tau,U_1^\eps)+\eps^3L^{-1}B
\left[C\partial_{x}^3\underline{U}+\partial_xF_e(0,\underline{U})\right]\\
&-\eps^3 L^{-1}(I-\Pi)AL^{-1}(I-\Pi)\partial_xF(0,\tau,\underline{U})+O(\eps^4).
\end{align*}
Using (\ref{U 2}), we then get
\begin{align}
h(0,\tau)=&-\eps B(\tau)\left[\partial_x\Phi_0+\eps B(0)\partial_x^2\Phi_0+\eps^2B^2(0)\partial_x^3\Phi_0-\eps^2\partial_xf_0(0)\right]
+\eps^2f_1(\tau)\nonumber\\
&+\eps^3L^{-1}B\left[C\partial_{x}^3\Phi_0+\partial_xF_e(0,\Phi_0)\right]-\eps^3L^{-1}(I-\Pi)A\partial_xf_0+O(\eps^4),\label{h 30}
\end{align}
where
$$f_1(\tau):=L^{-1}(I-\Pi)F(0,\tau,U_1^\eps).$$
Based on
$$U^\eps(0,\tau)=\underline{U}(0)+h(0,\tau)=\Phi_0+h(0,\tau)-h(0,0),$$
and remarking that $L^{-1}B=-A/4$ and $L^{-1}(I-\Pi)A\partial_xf_0\equiv0$, we can update the expansion of $\underline{U}(0)$ as
\begin{align}\label{U 3}
\underline{U}(0)=&\Phi_0+\eps B(0)\partial_x\Phi_0+\eps^2B^2(0)\partial_x^2\Phi_0-\eps^2f_1(0)+\eps^3B^3(0)\partial_x^3\Phi_0\nonumber\\
&-\eps^3B(0)\partial_xf_0(0)
+\frac{\eps^3}{4}A(0)\left[C\partial_x^3\Phi_0+\partial_xF_e(0,\Phi_0)\right]+O(\eps^4),
\end{align}
and then get an expression $U^\eps_3$ of the prepared initial data up to $O(\eps^4)$, i.e. $U^\eps(0,\tau)=U^\eps_3(\tau)+O(\eps^4)$, as follows:
\begin{align}
U^\eps_3(\tau)=&U_2(\tau)+\eps^2\left[f_1(\tau)-f_1(0)\right]-\eps^3\left[B(\tau)-B(0)\right]B^2(0)\partial_x^3\Phi_0\\
&-\frac{\eps^3}{4}(A(\tau)-A(0))
\left[C\partial_{x}^3\Phi_0+\partial_xF_e(0,\Phi_0)\right]+\eps^3(B(\tau)-B(0))\partial_xf_0(0)\nonumber\\
=&U_2(\tau)+\eps^2\left[f_1(\tau)-f_1(0)\right]+\frac{i\eps^3}{4}\begin{pmatrix}0&\fe^{2i\tau}-1\\ 1-\fe^{-2i\tau}&0\end{pmatrix}\partial_x^3\Phi_0\nonumber\\
&+\frac{i\eps^3}{4}\begin{pmatrix}1-\fe^{2i\tau}&0\\ 0&1-\fe^{-2i\tau}\end{pmatrix}\partial_x(V_m(0)\Phi_0)\nonumber\\
& +\frac{\eps^3}{4}\begin{pmatrix}0&1-\fe^{2i\tau}\\ 1-\fe^{-2i\tau}&0\end{pmatrix}\partial_xF_e(0,\Phi_0).\label{U3eps}
\end{align}

%% Up to here

%A detailed computing based on (\ref{F def}) gives the explicit expression of $g=(g_1,g_2)$ as
%\begin{align*}
%g_1(\tau)=&\frac{i\fe^{2i\tau}}{4}\partial_x\phi_2\left[V_e+\lambda(|\phi_1|^2-|\phi_2|^2)\right]
%+\frac{i\fe^{-2i\tau}}{4}V_m\partial_x\phi_1\\
%&-\frac{i\lambda}{2}\phi_1Re\left[\fe^{2i\tau}\phi_2\partial_x\overline{\phi_1}-\fe^{-2i\tau}\phi_1\partial_x\overline{\phi_2}\right],\\
%g_2(\tau)=&\frac{i\fe^{-2i\tau}}{4}\partial_x\phi_1\left[V_e-\lambda(|\phi_1|^2-|\phi_2|^2)\right]
%+\frac{i\fe^{2i\tau}}{4}V_m\partial_x\phi_2\\
%&+\frac{i\lambda}{2}\phi_2Re\left[\fe^{2i\tau}\phi_2\partial_x\overline{\phi_1}-\fe^{-2i\tau}\phi_1\partial_x\overline{\phi_2}\right].
%\end{align*}
\subsubsection{Uniform boundedness of time derivatives up to order 3}
Repeating the procedure above, we are going to get the next order expansion. Here we assume that derivatives of $h$ until order 3 are bounded. From (\ref{ht all}), we have
$$\partial_th=-\eps B\partial_{tx}\underline{U}+\eps^2L^{-1}(I-\Pi)\left[\partial_tF(t,\tau,\underline{U})
+\partial_uF(t,\tau,\underline{U})\partial_t\underline{U}\right]+O(\eps^3).$$
Plugging the above expansion and (\ref{h 3}) into (\ref{h all}) and letting $t=0$, we get
\begin{align}
h(0,\tau)&=-\eps B\partial_x\underline{U}+\eps L^{-1}(I-\Pi)A\left[\eps B\partial_x^2\underline{U}-\eps^2f_1(\tau)-\eps^3L^{-1}B\partial_{txx}\underline{U}
\right]\nonumber\\
&\quad+\eps^2L^{-1}(I-\Pi)F(0,\tau,U^\eps_2)
+\eps^3L^{-1}B\partial_{tx}\underline{U}-\eps^4f_t(\tau)+O(\eps^5)\nonumber\\
&=-\eps B\partial_x\underline{U}+\eps^2L^{-1}(I-\Pi)F(0,\tau,U^\eps_2)-\eps^3 L^{-1}(I-\Pi)Af_1\nonumber\\
&\quad-\frac{\eps^3}{4}A\partial_{tx}\underline{U}-\eps^4f_t(\tau)+O(\eps^5),
\label{h 4}
\end{align}
where
\begin{align*}
f_t(\tau)&:=L^{-2}(I-\Pi)\left[\partial_tF(0,\tau,\Phi_0)
+\partial_uF(0,\tau,\Phi_0)(C\partial_x^2\Phi_0+F_e(0,\Phi_0))\right]\\
&=\frac{i}{4}A(\tau)\left[\partial_tV_m(0)\Phi_0+V_m(0)(C\partial_x^2\Phi_0+ F_e(0,\Phi_0))\right].
\end{align*}
From (\ref{U def}), we find
\begin{align*}
\partial_t\underline{U}(0)&=C\partial_x^2\underline{U}+\Pi F(0,\tau,U_1^\eps)-\eps\Pi A\partial_xf_1+O(\eps^2).%\label{ht 4}
\end{align*}
Then (\ref{h 4}) becomes
\begin{align}
h(0,\tau)=&-\eps B(\tau)\partial_x\underline{U}+\eps^2L^{-1}(I-\Pi)F(0,\tau,U^\eps_2)-\eps^3 L^{-1}(I-\Pi)Af_1\nonumber\\
&-\frac{\eps^3}{4}A(\tau)\left[C\partial_x^3\underline{U}+\Pi\partial_xF(0,\tau,U_1^\eps)-\eps\Pi A\partial_x^2f_1\right]-\eps^4f_t(\tau)+O(\eps^5).\label{h 41}
\end{align}
Combining (\ref{h 41}) with (\ref{U 3}) and noting $A(0)C=4B^3(0)$, we get
\begin{align}
h(0,\tau)
=&-\eps B(\tau)\big[\partial_x\Phi_0+\eps B(0)\partial_x^2\Phi_0+\eps^2B^2(0)\partial_x^3\Phi_0-\eps^2\partial_xf_1(0)+2\eps^3B^3(0)\partial_x^4\Phi_0\nonumber\\
&\quad-\eps^3B(0)\partial_x^2f_0(0)+\frac{\eps^3}{4}A(0)\partial_x^2F_e(0,\Phi_0)\big]+\eps^2L^{-1}(I-\Pi)F(0,\tau,U^\eps_2)\nonumber\\
&\quad-\eps^3 L^{-1}(I-\Pi)Af_1
-\frac{\eps^3}{4}A(\tau)C\left[\partial_x^3\Phi_0+\eps B(0)\partial_x^4\Phi_0\right]\nonumber\\
&\quad-\frac{\eps^3}{4}A(\tau)\left[\Pi\partial_xF(0,\tau,U_1^\eps)-\eps\Pi A\partial_x^2f_1\right]-\eps^4f_t(\tau)+O(\eps^5).\label{h 40}
\end{align}
Then we have
\begin{align}\label{U 4}
\underline{U}(0)=&\Phi_0+\eps B(0)\partial_x\Phi_0+\eps^2B^2(0)\partial_x^2\Phi_0-\eps^2f_2^\eps(0)+2\eps^3B^3(0)\partial_x^3\Phi_0-\eps^3B(0)\partial_xf_1(0)\nonumber\\
&+\frac{\eps^3}{4}A(0)\Pi\partial_xF(0,\tau,U_1^\eps)+\eps^3g^\eps(0)+3\eps^4B^4(0)\partial_x^4\Phi_0-\eps^4B^2(0)\partial_x^2f_0(0)\nonumber\\
&+\frac{\eps^4}{4}B(0)A(0)\partial_x^2F_e(0,\Phi_0)-\frac{\eps^4}{4}A(0)\Pi A\partial_x^2f_1+\eps^4f_t(0)+O(\eps^5),
\end{align}
where
\begin{align*}
f_2(\tau):=L^{-1}(I-\Pi)F(0,\tau,U^\eps_2),\quad g_1(\tau):=L^{-1}(I-\Pi)Af_1.
\end{align*}
%A detailed computation gives $w=(w_1,w_2)^T$ explicitly as
%\begin{align*}
%&w_1(\tau)=\frac{\lambda e^{4i\tau}}{16}\left[\partial_x\phi_2(\phi_2\partial_x\overline{\phi_1}-\overline{\phi_1}\partial_x\phi_2)
%+\phi_2(\phi_2\partial_x^2\overline{\phi_1}-\overline{\phi_1}\partial_x^2\phi_2)\right]\\
%&\qquad\quad+\frac{ e^{4i\tau}}{16}\partial_x(V_m\partial_x\phi_2),\\
%&w_2(\tau)=\frac{\lambda e^{-4i\tau}}{16}\left[\partial_x\phi_1(\partial_x\phi_1\overline{\phi_2}-\partial_x\overline{\phi_2}\phi_1)
%+\phi_1(\partial_x^2\phi_1\overline{\phi_2}-\partial_x^2\overline{\phi_2}\phi_1)\right]\\
%&\quad\qquad -\frac{e^{-4i\tau}}{16}\partial_x(V_m\partial_x\phi_1).
%\end{align*}
Thus, we get the prepared initial data $U^\eps(0,\tau)=U_4^\eps(\tau)+O(\eps^5)$, with
\begin{align}
 U^\eps_4(\tau)
 &=U_2(\tau)+\eps^2\left[f_2(\tau)-f_2(0)\right]+\frac{i\eps^3}{4}\begin{pmatrix}0&\fe^{2i\tau}-1\\ 1-\fe^{-2i\tau}&0\end{pmatrix}\partial_x^3\Phi_0\label{Ueps 4}\\
 &+\frac{\eps^3}{4}\begin{pmatrix}0&1-\fe^{2i\tau}\\ 1-\fe^{-2i\tau}&0\end{pmatrix}\Pi\partial_xF(0,\tau,U_1^\eps)-\eps^3\left[g_1(\tau)-g_1(0)\right]\nonumber\\
 &-\frac{i\eps^3}{2}\begin{pmatrix}0&\fe^{2i\tau}-1\\ 1-\fe^{-2i\tau}&0\end{pmatrix}\partial_xf_1(0)-\frac{3\eps^4}{16}\begin{pmatrix}\fe^{2i\tau}-1&0\\ 0&\fe^{-2i\tau}-1\end{pmatrix}\partial_x^4\Phi_0\nonumber\\
 &-\frac{\eps^4}{8}\begin{pmatrix}0&\fe^{2i\tau}-1\\ 1-\fe^{-2i\tau}&0\end{pmatrix}\partial_x^2(V_m(0)\Phi_0)
 +\frac{\eps^4}{4}\begin{pmatrix}0&\fe^{2i\tau}-1\\ \fe^{-2i\tau}-1&0\end{pmatrix}\Pi A\partial_x^2f_1\nonumber\\
 &-\frac{i\eps^4}{8}\begin{pmatrix}1-\fe^{2i\tau}&0\\ 0&\fe^{-2i\tau}-1\end{pmatrix}\partial_x^2F_e(0,\Phi_0)
 -\eps^4\left(f_t(\tau)-f_t(0)\right).\nonumber
\end{align}

To get the next order expansion, we recall that we assume $\partial_th,\partial_t^2h,\partial_t^3h=\mathcal O(1)$ as $\eps\to0$, which indicates that
\begin{align*}
\partial_th=&-\eps B\partial_{tx}\underline{U}+\eps^2L^{-1}(I-\Pi)\frac{d}{dt}F\left(t,\tau,\underline{U}(t)+h(t)\right)+\eps^3L^{-1}B\partial_{ttx}\underline{U}\\
&-\eps^3L^{-1}(I-\Pi)AL^{-1}(I-\Pi)\frac{d}{dt}\partial_xF\left(t,\tau,\underline{U}(t)\right)+O(\eps^4).
\end{align*}
Using
\begin{align*}
&\frac{d}{dt}F\left(t,\tau,\underline{U}(t)+h(t)\right)=\partial_tF(t,\tau,\underline{U}+h)+\partial_uF(t,\tau,\underline{U}+h)
(\partial_t\underline{U}+\partial_th)\\
&=\partial_tF(t,\tau,\underline{U}-\eps B\partial_x\underline{U})+\partial_uF(t,\tau,\underline{U}-\eps B\partial_x\underline{U})
(\partial_t\underline{U}-\eps B\partial_{tx}\underline{U})+O(\eps^2)
\end{align*}
and denoting
\begin{align*}
H_0(\tau):=&L^{-1}(I-\Pi)[\partial_tF(0,\tau,\Phi_0)+\partial_uF(0,\tau,\Phi_0)
C\partial_x^2\Phi_0+\partial_uF(0,\tau,\Phi_0)F_e(0,\Phi_0)],\\
H_1(\tau):=&\partial_tF(0,\tau,U_1^\eps)+\partial_uF(0,\tau,U_1^\eps)
C(\partial_x^2\Phi_0+\eps B(0)\partial_x^3\Phi_0)\\
&+\partial_uF(0,\tau,U_1^\eps)\left[\Pi F(0,\tau,U_1^\eps)-\eps\Pi A\partial_xf_1\right]\\
&-\eps \partial_uF(0,\tau,U_1^\eps)B(\tau)\left[C\partial_x^3\Phi_0+\partial_xF_e(0,\Phi_0)\right],
\end{align*}
we get
\begin{align}
\partial_t h(0,\tau)=&-\eps B(\tau)\partial_{tx}\underline{U}+\eps^2L^{-1}(I-\Pi)H_1+\eps^3L^{-1}B\partial_{ttx}\underline{U}\nonumber\\
&-\eps^3L^{-1}(I-\Pi)A\partial_xH_0+O(\eps^4).\label{ht 4}
\end{align}
A detailed computation gives
$$H_0(\tau)=-iB(\tau)\left[\partial_tV_m(0)\Phi_0+V_m(0)(C\partial_{x}^2\Phi_0+F_e(0,\Phi_0))\right].$$
Plugging (\ref{h 30}) into (\ref{U def}) at $t=0$, we have
\begin{align*}
\partial_t\underline{U}(0)=&C\partial_x^2\Phi_0+\Pi F(0,\tau,U_2^\eps)+\eps CB(0)\partial_x^3\Phi_0
+\frac{\eps^2}{2}C\partial_x^4\Phi_0-\eps\Pi A\partial_xf_1\\
&-\eps^2C\partial_x^2f_0(0)
+\frac{\eps^2}{4}\partial_x^2F_e(0,\Phi_0)+O(\eps^3),
\end{align*}
and
\begin{align*}
\partial_t^2\underline{U}(0)=&C^2\partial_x^4\Phi_0+C\partial_x^2F_e(0,\Phi_0)
+Z_e+O(\eps),
\end{align*}
where
$$Z_e=\partial_uF_e(0,\Phi_0)\left(C\partial_x^2\Phi_0+F_e(0,\Phi_0)\right)
+\partial_tF_e(0,\Phi_0).$$
Combining the above two identities with (\ref{ht 4}), and inserting them together with  (\ref{U 4}) and (\ref{h 40}) into (\ref{h all}), we finally obtain the  prepared initial data  $U^\eps(0,\tau)=U_5^\eps(\tau)+O(\eps^6)$, with
%\begin{align}
%h(0,\tau)=&-\eps B\partial_x\underline{U}+\eps^2L^{-1}(I-\Pi)F(0,U_3^\eps)-\eps^3L^{-1}(I-\Pi)A\partial_xf_2^\eps
%\nonumber\\
%&+\eps^3L^{-1}B\partial_{tx}\underline{U}-\eps^4L^{-2}(I-\Pi)f_t^\eps
%-\eps^5L^{-2}B\partial_{ttx}\underline{U}-\eps^5L^{-1}g_t^\eps\nonumber\\
%&-\eps^5L^{-1}(I-\Pi)AL^{-1}(I-\Pi)A\partial_{x}^2g+O(\eps^6).\label{h 60}
%\end{align}
%
%Then (\ref{h 60}) becomes
%\begin{align}
%&h(0,\tau)=-\eps B(\tau)\partial_x\underline{U}+\eps^2L^{-1}(I-\Pi)F(0,U_3^\eps)-\eps^3L^{-1}(I-\Pi)A\partial_xf_2^\eps\nonumber\\
%&-\frac{\eps^3}{4}A(\tau)C\partial_{x}^3\underline{U}
%-\frac{\eps^3}{4}A(\tau)\Pi\partial_{x}F(0,U_2^\eps)+\frac{\eps^4}{4}A(\tau)\Pi A\partial_x^2f^\eps_2-\eps^4L^{-2}(I-\Pi)f_t^\eps\nonumber\\
%&-\frac{\eps^5}{16}A(\tau)(C\partial_x^5\underline{U}+\partial_x^3F(0,\underline{U}))-\eps^5L^{-1}(I-\Pi)AL^{-1}(I-\Pi)A\partial_{x}^2g\nonumber\\
%&-\eps^5L^{-2}B\left[C^2\partial_x^5\underline{U}+C\partial_x^3F(0,\underline{U})
%+\partial_{tx}F(0,\underline{U})\right]\nonumber\\
%&-\eps^5L^{-2}B\partial_x\left(\partial_uF(0,\underline{U})\left(C\partial_x^2\underline{U}+F(0,\underline{U})\right)\right)
%+O(\eps^6).\label{h 61}
%\end{align}
%Plugging (\ref{U 4}) into (\ref{h 61}), we can finally get
%the initial data $U^\eps_5$ such that $U^\eps(0,\tau)=U_5^\eps(\tau)+O(\eps^6)$ as
\begin{align}
 & U^\eps_5(\tau)=
 U_2(\tau)+\eps^2\left(f_3(\tau)-f_3(0)\right)+\frac{i\eps^3}{4}\begin{pmatrix}0&\fe^{2i\tau}-1\\ 1-\fe^{-2i\tau}&0\end{pmatrix}\partial_x^3\Phi_0\label{U 5}\\
 &+\frac{\eps^3}{4}\begin{pmatrix}0&1-\fe^{2i\tau}\\ 1-\fe^{-2i\tau}&0\end{pmatrix}\Pi\partial_xF(0,\tau,U_2^\eps)-\eps^3\left(g_2(\tau)-g_2(0)\right)\nonumber\\
 &-\frac{i\eps^3}{2}\begin{pmatrix}0&\fe^{2i\tau}-1\\ 1-\fe^{-2i\tau}&0\end{pmatrix}\partial_xf_2(0)+\frac{\eps^4}{4}\begin{pmatrix}\fe^{2i\tau}-1&0\\ 0&\fe^{-2i\tau}-1\end{pmatrix}\partial_x^2f_1(0)\nonumber\\
 &-\frac{i\eps^4}{8}\begin{pmatrix}1-\fe^{2i\tau}&0\\ 0&\fe^{-2i\tau}-1\end{pmatrix}\Pi\partial_x^2F(0,\tau,U_1^\eps)
 -\frac{3\eps^4}{16}\begin{pmatrix}\fe^{2i\tau}-1&0\\ 0&\fe^{-2i\tau}-1\end{pmatrix}\partial_x^4\Phi_0\nonumber\\
 &+\frac{i\eps^4}{2}\begin{pmatrix}0&\fe^{2i\tau}-1\\ 1-\fe^{-2i\tau}&0\end{pmatrix}\partial_xg_1(0)+\eps^4(v(\tau)-v(0))-\eps^4\left(w_1(\tau)-w_1(0)\right)\nonumber\\
 &+\frac{\eps^4}{4}\begin{pmatrix}0&\fe^{2i\tau}-1\\ \fe^{-2i\tau}-1&0\end{pmatrix}\Pi A\partial_x^2f_1+\frac{3i\eps^5}{16}\begin{pmatrix}0&\fe^{2i\tau}-1\\ 1-\fe^{-2i\tau}&0\end{pmatrix}\partial_x^5\Phi_0\nonumber\\
 &+\eps^5\left(w_0(\tau)-w_0(0)\right)-\frac{3\eps^5}{16}\begin{pmatrix}0&\fe^{2i\tau}-1\\ \fe^{-2i\tau}-1&0\end{pmatrix}\partial_x^3F_e(0,\Phi_0)\nonumber\\
 &+\frac{i\eps^5}{8}\begin{pmatrix}1-\fe^{2i\tau}&0\\ 0&\fe^{-2i\tau}-1\end{pmatrix}\Pi A\partial_x^3f_1-\frac{\eps^5}{8}\begin{pmatrix}\fe^{2i\tau}-1&0\\ 0&1-\fe^{-2i\tau}\end{pmatrix}\partial_xZ_m\nonumber\\
 &-\frac{i\eps^5}{8}\begin{pmatrix}\fe^{2i\tau}-1&0\\ 0&\fe^{-2i\tau}-1\end{pmatrix}\partial_x^3(V_m(0)\Phi_0)-\frac{i\eps^5}{8}\begin{pmatrix}0&\fe^{2i\tau}-1\\ 1-\fe^{-2i\tau}&0\end{pmatrix}\partial_x Z_e\nonumber
\end{align}
where
\begin{align*}
&f_3(\tau):=L^{-1}(I-\Pi)F(0,U^\eps_3),\qquad g_2(\tau)=L^{-1}(I-\Pi)A\partial_xf_2,\\
&v(\tau):=L^{-1}(I-\Pi)AL^{-1}(I-\Pi)A\partial_xf_1,\\
&Z_m=\partial_tV_m(0)\Phi_0+V_m(0)(C\partial_x^2\Phi_0+ F_e(0,\Phi_0)),\\
&w_0(\tau)=L^{-2}(I-\Pi)A\partial_xH_0,\qquad w_1(\tau):=L^{-2}(I-\Pi)H_1.\\
\end{align*}
The augmented problem \eqref{TS}, endowed with one of the initial data $U_{2k-1}^\eps$ ($k\in\{1,2,3\}$), provides a solution which has uniformly bounded time derivatives up to the order $k$, respectively.
The proof of this property, which will not be done here, can be derived in an analogous way as in \cite{clm}.
The detailed proof would be lengthy and tedious, so we omit it for brevity.  

\section{Numerical methods}\label{sec:method}
From the analytical results in \cite{Mauser,Najman}, the solution of the Dirac equation (\ref{nlsw}) decays very fast at infinity for localized initial data. Thus,
to give the numerical discretization, we truncate the Dirac equation (\ref{nlsw}) in the whole space to a bounded interval $(a,b)$ which is large enough, and impose periodic boundary conditions as done in \cite{Bao1,Bao2,Tang2,Tang3}, i.e.
\begin{align}
&i\pa_t \Phi^\eps=-\frac{i}{\eps}\alpha \pa_x \Phi^\eps+\frac{1}{\eps^2}\beta \Phi^\eps+\left[V_e(t)+V_m(t)\alpha\right]\Phi^\eps+\lambda\left(\beta \Phi^\eps,\Phi^\eps\right)\beta \Phi^\eps,\ x\in (a,b),\nonumber\\
&\Phi^\eps(t,a)=\Phi^\eps(t,b),\ \pa_x\Phi^\eps(t,a)=\pa_x\Phi^\eps(t,b),\ t\geq0;\quad \Phi^\eps(0,x)=\Phi_0(x).\label{bc}
\end{align}
\subsection{Uniformly accurate schemes}
Based on the two-scale formulation, we give the numerical discretization to solve the nonlinear Dirac equation.
Due to the truncation (\ref{bc}), the corresponding two-scale problem (\ref{TS}) reads
\begin{subequations}\label{nlsw trun}
\begin{align}
&\pa_t U^\eps+\frac{1}{\eps^2}\pa_\tau U^\eps=-\frac{1}{\eps}A(\tau)\pa_x U^\eps+F(t,\tau,U^\eps),\quad t>0,\ \tau\in\TT,\ x\in (a,b),\label{nlsw trun1}\\
& U^\eps(t,\tau,x)=U^\eps(t,\tau+2\pi,x),\quad t\geq0,\ \tau\in\TT,\ x\in\overline{(a,b)},\\
&U^\eps(t,\tau,a)=U^\eps(t,\tau,b),\quad t\geq0,\ \tau\in\TT,
\end{align}
\end{subequations}
with the constructed initial data
\begin{equation}\label{initial add}
U^\eps(0,\tau,x)=U^{\eps,0}(\tau,x),\quad  \tau\in\TT,\ x\in\overline{(a,b)}.
\end{equation}

Let $\Delta t>0$ be the time step, and denote $t_n=n\Delta t$ for $n=0,1,\ldots$
Let us choose mesh sizes $\Delta x=(b-a)/N$ and $\Delta\tau=2\pi/N_\tau$ with $N$ and $N_\tau$ two positive even integers. We also denote the grid points in $x$ and $\tau$ by
\begin{equation*}
x_j:=a+j\Delta x,\quad j=0,1,\ldots, N;\qquad \tau_j=j\Delta\tau,\quad j=0,1,\ldots,N_\tau.
\end{equation*}

%We will use the micro-macro numerical scheme introduced in \cite{clm}.
%\bs
%
%\ni
%{\em Prediction step}
%\begin{align*}
%\underline U^{n+1/2}&=\underline U^{n}-\frac{\Delta t}{2\eps}\Pi\left(A(\tau)\pa_x h^n\right)+\frac{\Delta t}{2} \Pi\left(F(\underline U^n+h^n)\right)\\
%h^{n+1/2} &=h^n-\frac{\Delta t}{2\eps^2}L h^{n+1/2}-\frac{\Delta t}{2\eps}A(\tau)\pa_x \underline U^{n+1/2}-\frac{\Delta t}{2\eps}(I-\Pi)\left(A(\tau)\pa_x h^n\right)\\
%&\quad +\frac{\Delta t}{2}(I-\Pi)\left(F(\underline U^n+h^n)\right).
%\end{align*}
%\ni
%{\em Correction step}
%\begin{align*}
%\underline U^{n+1}&=\underline U^{n}-\frac{\Delta t}{\eps}\Pi\left(A(\tau)\pa_x h^{n+1/2}\right)+\Delta t \Pi\left(F(\underline U^{n+1/2}+h^{n+1/2})\right)\\
%h^{n+1} &=h^n-\frac{1}{2\eps^2}L \left(h^{n}+h^{n+1}\right)-\frac{\Delta t}{\eps}A(\tau)\pa_x \underline U^{n+1/2}-\frac{\Delta t}{\eps}(I-\Pi)\left(A(\tau)\pa_x h^{n+1/2}\right)\\
%&\quad +\Delta t(I-\Pi)\left(F(\underline U^{n+1/2}+h^{n+1/2})\right).
%\end{align*}

For first order time discretization, inspired by the scheme in \cite{cclm}, we propose the following semi-implicit Euler method:
\begin{align}
\frac{U^{n+1}(\tau,x)-U^{n}(\tau,x)}{\Delta t}+\frac{1}{\eps^2}\partial_\tau U^{n+1}(\tau,x)=-\frac{1}{\eps}A(\tau)\partial_xU^{n+1}(\tau,x)+F^n(\tau,x),\label{semiimplicit}
\end{align}
where we denote $U^{n}(\tau,x)\approx U^{\eps}(t_n,\tau,x)$ and $F^n(\tau,x)\approx F(t_n,\tau,U^\eps(t_n,\tau,x))$, for $n=0,1,\ldots$ 
\subsubsection{Uniform accuracy of the semi-implicit scheme \eqref{semiimplicit}}
We recall that the initial data $U_3^\eps(\tau)$ given by \eqref{U3eps} has been obtained by Chapman-Enskog expansions, in order to ensure the uniform boundedness of $U^\eps$, $\partial_tU^\eps$ and $\partial_t^2U^\eps$ with respect to $\eps$. Although we omit the rigorous proof of this property, we assume that this holds and use it to prove the uniform accuracy of the semi-discrete scheme \eqref{semiimplicit} initialized with $U^0=U_3^\eps$.
In fact, we have the following result.
\begin{theorem}
\label{theo1}
Assume that the solution $U^\eps(t,\tau,x)$ of \eqref{TS} has uniformly bounded (with respect to $\eps$) derivatives for $t\in[0,T]$, up to order two. Let $U^n(\tau,x)$ be the solution of the numerical scheme \eqref{semiimplicit}, initialized with $U^0=U^{\eps,0}$, and $\Delta t>0$. Then, for $n\Delta t\leq T$, we have
$$\left\|U^\eps(n\Delta t)-U^n\right\|_{L^\infty_\tau L^2_x}\leq C\Delta t,$$
where $C$ is a constant independent of $n$, $\Delta t$ and $\eps$.
\end{theorem}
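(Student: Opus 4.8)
The plan is to run the standard consistency $\times$ stability argument for \eqref{semiimplicit}, with attention to the stiff terms and to $\eps$-uniformity. Write the scheme compactly as $(I+\Delta t\,\mathcal L)U^{n+1}=U^n+\Delta t\,F^n$ with $\mathcal L:=\frac1{\eps^2}\pa_\tau+\frac1\eps A(\tau)\pa_x$, and note the key structural fact that $\mathcal L$ is formally skew-symmetric on $L^2_\tau L^2_x$ with the periodic boundary conditions of \eqref{nlsw trun}: $\pa_\tau$ is skew-symmetric in $\tau$, while $A(\tau)$ is Hermitian and $x$-independent, so $\RE\int_a^b(A\pa_xv)\cdot\overline v\,dx=\tfrac12[Av\cdot\overline v]_a^b=0$. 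The first step is the resolvent bound $\|\mathcal S_{\Delta t}\|\le 1$ with $\mathcal S_{\Delta t}:=(I+\Delta t\,\mathcal L)^{-1}$ (which also makes the scheme well posed). On $L^2_\tau L^2_x$ it follows at once by testing $(I+\Delta t\,\mathcal L)V=W$ against $V$; to get it on the norm $X:=L^\infty_\tau L^2_x$ of the statement I would pass to the Fourier variable $\xi$ dual to $x$ and solve the $2\pi$-periodic linear ODE in $\tau$ satisfied by each mode $\widehat V(\cdot,\xi)\in\CC^2$, whose fundamental matrix is $\fe^{-\eps^2\tau/\Delta t}$ times a unitary (again because $A$ is Hermitian); this exhibits $V$, for each $\tau$, as a $\tau$-independent convex average over $r>0$ of the $\tau$-translated and (in $x$) unitarily rotated copies of $W$, none of which increases $\|\cdot\|_{L^2_x}$, whence $\|\mathcal S_{\Delta t}W\|_X\le\|W\|_X$.

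The second, and \emph{crucial}, step is the consistency estimate. Inserting the exact solution $U^\eps$ of \eqref{TS} into \eqref{semiimplicit} produces the residual
$$R^n:=\frac{U^\eps(t_{n+1})-U^\eps(t_n)}{\Delta t}+\mathcal L\,U^\eps(t_{n+1})-F(t_n,\tau,U^\eps(t_n)).$$
Here is where the design pays off: using \eqref{TS} at $t=t_{n+1}$ to replace $\mathcal L\,U^\eps(t_{n+1})$ by $-\pa_tU^\eps(t_{n+1})+F(t_{n+1},\tau,U^\eps(t_{n+1}))$, the stiff ($\eps^{-1}$ and $\eps^{-2}$) contributions cancel \emph{exactly}, leaving
$$R^n=\Big(\frac{U^\eps(t_{n+1})-U^\eps(t_n)}{\Delta t}-\pa_tU^\eps(t_{n+1})\Big)+\big(F(t_{n+1},\tau,U^\eps(t_{n+1}))-F(t_n,\tau,U^\eps(t_n))\big).$$
Taylor's formula with integral remainder bounds the first bracket by $\tfrac12\Delta t\sup_{t\in[0,T]}\|\pa_t^2U^\eps(t)\|_X$, and the second by $\Delta t$ times a constant depending on $\sup\|U^\eps\|$, $\sup\|\pa_tU^\eps\|$, the $C^1$-in-$t$ bound on $V_e,V_m$ and the smoothness of $F$ in $u$. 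Hence $\|R^n\|_X\le C\Delta t$ with $C$ independent of $\eps$ — this is exactly the place where the hypothesis on the prepared initial data, i.e. the uniform boundedness of $U^\eps,\pa_tU^\eps,\pa_t^2U^\eps$, enters, and it is the reason the scheme is uniformly accurate.

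Finally, subtracting \eqref{semiimplicit} from the consistency relation gives, for the error $e^n:=U^\eps(t_n)-U^n$,
$$e^{n+1}=\mathcal S_{\Delta t}\Big(e^n+\Delta t\big(F(t_n,\tau,U^\eps(t_n))-F(t_n,\tau,U^n)\big)+\Delta t\,R^n\Big),$$
so by the resolvent bound and the Lipschitz continuity of $F$, $\|e^{n+1}\|_X\le(1+L\Delta t)\|e^n\|_X+C\Delta t^2$. Since the scheme and the two-scale solution share the prepared initial datum $U^{\eps,0}$, one has $e^0=0$, and a discrete Gronwall inequality yields $\|e^n\|_X\le C\Delta t$ for $n\Delta t\le T$, which is the claim; the $L^2_x$ bound follows a fortiori. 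Two technical points must be dispatched along the way: the cubic term $(\beta u,u)\beta u$ is only locally Lipschitz, so $L$ depends on $L^\infty_x$ bounds for $U^\eps(t_n)$ and $U^n$ — obtained by a routine induction on $n$ (the bound $\|e^n\|_X\le C\Delta t$ keeps $U^n$ in a fixed ball once $\Delta t$ is small enough), possibly after running the whole argument in $L^\infty_\tau H^s_x$ with $s>1/2$, which is legitimate since $\mathcal L$ commutes with $\pa_x$. I expect the $L^\infty_\tau$ version of the resolvent/stability estimate for the stiff operator $\mathcal L$ to be the main technical obstacle; once it is secured, the $\eps$-uniformity of the final bound rides entirely on the assumed uniform boundedness of $\pa_t^2U^\eps$, and the remaining steps are the classical consistency--stability machinery.
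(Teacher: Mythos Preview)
Your proposal is correct and follows essentially the same route as the paper: Fourier transform in $x$, a contraction bound $\|(I+\Delta t\,\mathcal L)^{-1}\|_{L^\infty_\tau L^2_x}\le 1$ obtained by solving the $\tau$-ODE for each mode and exploiting that the fundamental solution is $e^{-\eps^2\tau/\Delta t}$ times a unitary (the paper reaches the same conclusion via the change of variable $Z=\mathrm{diag}(e^{-i\tau},e^{i\tau})V$ that makes the ODE constant-coefficient), then consistency by using \eqref{TS} at $t_{n+1}$ to cancel the stiff terms, and finally a bootstrap/Gronwall argument to handle the only locally Lipschitz cubic nonlinearity. Your remark that the Lipschitz step really requires $L^\infty_x$ control (hence working in $L^\infty_\tau H^s_x$, $s>1/2$, using that $\mathcal L$ commutes with $\partial_x$) is a point the paper leaves implicit.
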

\begin{proof}
Taking the Fourier transform of \eqref{semiimplicit} with respect to the variable $x$, we get
\begin{align*}
\frac{\widehat{U^{n+1}}-\widehat{U^{n}}}{\Delta t}+\frac{1}{\eps^2}\partial_\tau \widehat{U^{n+1}}=-\frac{1}{\eps}A(\tau)i\mu \widehat{U^{n+1}}+\widehat{F^n},
\end{align*}
where $\mu$ is the Fourier variable associated to $x$. In terms of the following quantity
$$Q(V)=V+\frac{\Delta t}{\eps}A(\tau)i\mu V+\frac{\Delta t}{\eps^2}\partial_\tau V,$$
the scheme reads
\begin{equation}\label{eqQ}Q\left(\widehat {U^{n+1}}\right)=\widehat{U^{n}}+\Delta t \widehat{F^n}.\end{equation}

\bigskip
\noindent{\em Step 1: invertibility property of $Q$.} We claim that $Q$ is invertible on $L^\infty_\tau L^2_\mu$ and that
\begin{equation}\label{proof add1}
\|Q^{-1}(W)\|_{L^\infty_\tau L^2_\mu}\leq \|W\|_{L^\infty_\tau L^2_\mu}.
\end{equation}
Indeed, consider the equation $Q(V)=W$ and set
$$Z(\tau):=\begin{pmatrix}\fe^{-i\tau}&0\\ 0&\fe^{i\tau}\end{pmatrix}V.$$
We get the equation
$$Z+i\Omega Z+\frac{\Delta t}{\eps^2}\partial_\tau Z=\begin{pmatrix}\fe^{-i\tau}&0\\ 0&\fe^{i\tau}\end{pmatrix}W,$$
with $$\Omega=\begin{pmatrix}\Delta t/\eps^2&\mu \Delta t/\eps\\ \mu \Delta t/\eps&-\Delta t/\eps^2\end{pmatrix}.$$
Then,
$$\frac{d}{d\tau}\left(\fe^{\frac{\eps^2}{\Delta t}(I+i\Omega)\tau}Z\right)=\frac{\Delta t}{\eps^2}\fe^{\frac{\eps^2}{\Delta t}(I+i\Omega)\tau}\begin{pmatrix}\fe^{-i\tau}&0\\ 0&\fe^{i\tau}\end{pmatrix}W.$$
We integrate this expression between $\tau$ and $\tau+2\pi$ and use the periodicity of $Z(\tau)$ to obtain
\begin{align*}&\left(\fe^{\frac{\tau+2\pi}{\Delta t}\eps^2}\fe^{i\frac{\tau+2\pi}{\Delta t}\eps^2\Omega}-\fe^{\frac{\tau}{\Delta t}\eps^2}\fe^{i\frac{\tau}{\Delta t}\eps^2\Omega}\right)Z(\tau)\\
&\hspace*{3cm}=\frac{\Delta t}{\eps^2}\int_\tau^{\tau+2\pi}\fe^{\frac{s}{\Delta t}\eps^2}\fe^{i\frac{s}{\Delta t}\eps^2\Omega}\begin{pmatrix}\fe^{-is}&0\\ 0&\fe^{is}\end{pmatrix}W(s)ds.\end{align*}
\sloppy Using that $\Omega$ is real-valued and symmetric, we get, with the notation $\|Z\|=\sqrt{|Z_1|^2+|Z_2|^2}$ for all $Z\in \CC^2$, that
\begin{align*}&\left\|\fe^{\frac{\tau+2\pi}{\Delta t}\eps^2}\fe^{i\frac{\tau+2\pi}{\Delta t}\eps^2\Omega}Z(\tau)-\fe^{\frac{\tau}{\Delta t}\eps^2}\fe^{i\frac{\tau}{\Delta t}\eps^2\Omega}Z(\tau)\right\|\\
&\hspace*{3cm}\geq \fe^{\frac{\tau+2\pi}{\Delta t}\eps^2}\left\|\fe^{i\frac{\tau+2\pi}{\Delta t}\eps^2\Omega}Z(\tau)\right\|-\fe^{\frac{\tau}{\Delta t}\eps^2}\left\|e^{i\frac{\tau}{\Delta t}\eps^2\Omega}Z(\tau)\right\|\\
&\hspace*{3cm}= \left(\fe^{\frac{\tau+2\pi}{\Delta t}\eps^2}-\fe^{\frac{\tau}{\Delta t}\eps^2}\right)\left\|Z(\tau)\right\|,
\end{align*}
and that
\begin{align*}&\left\|\frac{\Delta t}{\eps^2}\int_\tau^{\tau+2\pi}\fe^{\frac{s}{\Delta t}\eps^2}\fe^{i\frac{s}{\Delta t}\eps^2\Omega}\begin{pmatrix}\fe^{-is}&0\\ 0&\fe^{is}\end{pmatrix}W(s)ds\right\|_{L^2\mu}\\
&\hspace*{3cm}\leq \frac{\Delta t}{\eps^2}\int_\tau^{\tau+2\pi}\fe^{\frac{s}{\Delta t}\eps^2}\left\|\begin{pmatrix}\fe^{-is}&0\\ 0&\fe^{is}\end{pmatrix}W(s)\right\|_{L^2\mu}\ds\\
&\hspace*{3cm}\leq \left(\fe^{\frac{\tau+2\pi}{\Delta t}\eps^2}-\fe^{\frac{\tau}{\Delta t}\eps^2}\right)\|W\|_{L^\infty_\tau L^2_\mu}.
\end{align*}
These two inequalities give $\|Z(\tau)\|_{L^2_\mu}\leq \|W\|_{L^\infty_\tau L^2_\mu}$, which proves the claim.
\bigskip

\bigskip
\noindent{\em Step 2: Taylor expansion of the exact solution.} 
The exact solution of \eqref{TS} satisfies
$$U^\eps(t^n)-U^\eps(t^{n+1})+\Delta t \partial_t U^\eps(t^{n+1})=R^n\qquad \mbox{with}\quad R^n=\int_{t^n}^{t^{n+1}}(s-t^n)\partial^2_tU^\eps(s)ds$$
which yields
$$Q(\widehat{U^\eps(t^{n+1})})=\widehat{U^\eps(t^n)}+\Delta t \widehat{F}(t^{n+1},\tau,\widehat{U^\eps(t^{n+1})})-\widehat{R^n}.$$
Here we have denoted $\widehat G (\widehat U)=\widehat{G(U)}$.

We now introduce the error $E^n=\widehat{U^\eps(t^n)}-\widehat{U^n}$ and subtracting \eqref{eqQ} from this equation, we get
\begin{equation}\label{Eestimate}Q(E^{n+1})=E^n+\Delta t\left(\widehat{F}(t^{n+1},\tau,\widehat{U^\eps(t^{n+1})})-\widehat{F}(t^{n},\tau,\widehat{U^n})\right)-\widehat{R^n}.\end{equation}

\bigskip
\noindent{\em Step 3: error estimate.} We observe from the expression \eqref{F def} of $F$ and the assumptions on $V_e$ and $V_m$ that $F$ is locally Lipschitz continuous with respect to $t$ and $U^\eps$.
Let us fix $M>0$ such that $\|U^\eps\|_{L^\infty_tL^\infty_\tau L^2_x}\leq M$ for $t\in[0,T]$. Let $N$ be the number of time discretization points and $\Delta t=T/N$.
Let $n_0$ be the largest integer $n\leq N$ such that $\|U^n\|_{L^\infty_\tau L^2_x}\leq 2M$.
Our aim is to prove that $n_0=N$ and to estimate the error.

Let us proceed by contradiction and assume that  $n_0\leq N-1$. Then we have
$$\left\|\widehat{F}(t^{n+1},\tau,\widehat{U^\eps(t^{n+1})})-\widehat{F}(t^{n},\tau,\widehat{U^n})\right\|_{L^\infty_\tau L^2_\mu}\leq C\left(\Delta t+\|E^n\|_{L^\infty_\tau L^2_\mu}\right),$$
where $C$, here and after, is a generic constant which only depends on $M$ and $T$.
Applying \eqref{proof add1} to \eqref{Eestimate}, we deduce that
$$\|E^{n+1}\|_{L^\infty_\tau L^2_\mu}\leq (1+C\Delta t)\|E^n\|_{L^\infty_\tau L^2_\mu}+C\Delta t^2+\|R^n\|_{L^\infty_\tau L^2_\mu}.$$
Now, in order to estimate $R^n$, we apply the assumption made in Theorem \ref{theo1} saying that $\partial_t^2U$ is uniformly bounded with respect to $\eps$. Thus $\|R^n\|_{L^\infty_\tau L^2_\mu}\leq C\Delta t^2$. Therefore
$$\|E^{n+1}\|_{L^\infty_\tau L^2_\mu}\leq (1+C\Delta t)\|E^n\|_{L^\infty_\tau L^2_\mu}+C\Delta t^2,$$
for all $n\leq n_0$,
which implies
$$\|E^{n+1}\|_{L^\infty_\tau L^2_\mu}\leq C\Delta t.$$
In particular, if $n_0\Delta t<T$ and if we choose $C\Delta t<M$ then we deduce from this estimate that
$$\|\widehat {U^{n_0+1}}\|_{L^\infty_\tau L^2_\mu}\leq \|\widehat{U^\eps(t^{n_0+1})}\|_{L^\infty_\tau L^2_\mu}+\|E^{n_0+1}\|_{L^\infty_\tau L^2_\mu}\leq 2M.$$
This contradicts the fact that $n_0$ is the largest integer $n\leq N$ such that $\|U^n\|_{L^\infty_\tau L^2_x}\leq 2M$. We conclude that $n_0=N$. It is also clear from the above estimates that we have
$$\|E^{n}\|_{L^\infty_\tau L^2_\mu}\leq C\Delta t,$$
for all $n\leq N$, which concludes the proof.
\end{proof}

\subsubsection{The fully discretized scheme} 
In the $x$-direction, we apply the Fourier transform and obtain for $l=-N/2,\ldots,N/2-1$,
\begin{align}\label{semi-diff}
\frac{\widehat{U^{n+1}_l}(\tau)-\widehat{U^{n}_l}(\tau)}{\Delta t}+\frac{1}{\eps^2}\partial_\tau \widehat{U^{n+1}_l}(\tau)=-\frac{1}{\eps}A(\tau)i\mu_l\widehat{U^{n+1}_l}(\tau)+\widehat{F^n_l}(\tau),
\end{align}
where $\mu_l=\frac{2\pi l}{b-a}$ and
$$U^n(\tau,x)=\sum_{l=-N/2}^{N/2-1}\widehat{U^{n}_l}(\tau)\fe^{i\mu_l(x-a)},\quad F^n(\tau,x)=\sum_{l=-N/2}^{N/2-1}\widehat{F^n_l}(\tau)\fe^{i\mu_l(x-a)},$$
with
$$\widehat{U^{n}_l}(\tau)=\frac{1}{N}\sum_{j=0}^{N-1}U^n(\tau,x_j)\fe^{i\mu_l(x_j-a)},\quad
\widehat{F^{n}_l}(\tau)=\frac{1}{N}\sum_{j=0}^{N-1}F^n(\tau,x_j)\fe^{i\mu_l(x_j-a)}.$$
In terms of the two components $U^\eps=(u_1,u_2)^T$ and $F=(f_1,f_2)^T$, the numerical scheme (\ref{semi-diff}) reads
\begin{subequations}\label{semi-diff2}
\begin{align}
&\frac{\widehat{(u^{n+1}_1)}_l(\tau)-\widehat{(u^{n}_1)}_l(\tau)}{\Delta t}+\frac{1}{\eps^2}\partial_\tau \widehat{(u^{n+1}_1)}_l(\tau)=-\frac{ i\mu_l}{\eps}\fe^{2i\tau}\widehat{(u^{n+1}_2)}_l(\tau)+\widehat{(f^n_1)}_l(\tau),\\
&\frac{\widehat{(u^{n+1}_2)}_l(\tau)-\widehat{(u^{n}_2)}_l(\tau)}{\Delta t}+\frac{1}{\eps^2}\partial_\tau \widehat{(u^{n+1}_2)}_l(\tau)=-\frac{ i\mu_l}{\eps}\fe^{-2i\tau}\widehat{(u^{n+1}_1)}_l(\tau)+\widehat{(f^n_2)}_l(\tau).
\end{align}
\end{subequations}
Then we discretize the $\tau$-direction as follows
\begin{align*}
&\widehat{(u^{n}_j)}_l=\left(\widehat{(u^{n}_j)}_l(\tau_0),\ldots,\widehat{(u^{n}_j)}_l(\tau_{N_\tau-1})\right)^T,\quad l=-N/2,\ldots,N/2-1,\\
&\widehat{(f^n_j)}_l=\left(\widehat{(f^n_j)}_l(\tau_0),\ldots,\widehat{(f^n_j)}_l(\tau_{N_\tau-1})\right)^T,\quad j=1,2.
\end{align*}
Introducing the Fourier pseudo-differential matrix $D_\tau$ \cite{Shen}: $$D_\tau=\left(d_{n,m}\right)\in\CC^{N_\tau\times N_\tau}, \ d_{n,m}=\frac{i}{N_\tau}\sum_{l=-N_\tau/2}^{N_\tau/2-1}l\fe^{il(\tau_n-\tau_m)},\quad n,m=1,\ldots,N_{\tau},$$
we derive the full discretization of (\ref{semi-diff2}) as
\begin{subequations}\label{full-diff}
\begin{align}
&\frac{\widehat{(u^{n+1}_1)}_l-\widehat{(u^{n}_1)}_l}{\Delta t}+\frac{1}{\eps^2}D_\tau \widehat{(u^{n+1}_1)}_l=-\frac{ i\mu_l}{\eps}\fe^{2i\tau}\widehat{(u^{n+1}_2)}_l+\widehat{(f^n_1)}_l,\\
&\frac{\widehat{(u^{n+1}_2)}_l-\widehat{(u^{n}_2)}_l}{\Delta t}+\frac{1}{\eps^2}D_\tau \widehat{(u^{n+1}_2)}_l=-\frac{ i\mu_l}{\eps}\fe^{-2i\tau}\widehat{(u^{n+1}_1)}_l+\widehat{(f^n_2)}_l,\ n\geq0,
\end{align}
\end{subequations}
for $l=-\frac{N}{2},\ldots,\frac{N}{2}-1$, where here and after $\fe^{\pm2i\tau}$ are interpreted as diagonal matrices. Solving (\ref{full-diff}) and choosing $(u_1^0,u_2^0)^T=U^{\eps,0}$, we get the detailed scheme of the first order method (UA1) for $n=0,1,\ldots,$ and $l=-N/2,\ldots,N/2-1$,
\begin{subequations}\label{ua1}
  \begin{align}
  &A_\tau\widehat{(u_1^{n+1})}_0=\frac{1}{\Delta t}\widehat{(u_1^{n})}_0+\widehat{(f_1^{n})}_0,\ A_\tau\widehat{(u_2^{n+1})}_0=\frac{1}{\Delta t}\widehat{(u_2^{n})}_0+\widehat{(f_2^{n})}_0,\\
  &B_\tau^l\widehat{(u_1^{n+1})}_l=A_\tau^l\left[\frac{1}{\Delta t}\widehat{(u_1^{n})}_l+\widehat{(f_1^{n})}_l\right]-\frac{1}{\Delta t}\widehat{(u_2^{n})}_l-\widehat{(f_2^{n})}_l,\quad l\neq0,\\
  &\widehat{(u_2^{n+1})}_l=\frac{\eps\fe^{-2i\tau}}{i\mu_l}\left[-A_\tau\widehat{(u_1^{n+1})}_l+\frac{1}{\Delta t}\widehat{(u_1^{n})}_l+\widehat{(f_1^{n})}_l\right],\quad l\neq0,
  \end{align}
\end{subequations}
where
$$A_\tau=\frac{Id}{\Delta t}+\frac{D_\tau}{\eps^2},\ A_\tau^l=\frac{\eps A_\tau\fe^{-2i\tau}}{i\mu_l},\ B_\tau^l=
A_\tau^lA_\tau-\frac{i\mu_l\fe^{-2i\tau}}{\eps},\quad l\neq0,$$
and where $Id$ denotes the identity matrix.
The inverse of the above matrices $A_\tau,B_\tau^l$ can be computed numerically once for all, so the UA1 method (\ref{ua1}) is explicit in practise. Also noticing that $B_\tau^l=-B_\tau^{-l}$, we only need to store and compute the inverse of $B_\tau^l$ for $l=-N/2,\ldots,-1$. The computational cost of the UA1 scheme is $O(NN_\tau^2\log(N))$ per time step.
\subsubsection{The uniformly accurate second order scheme}
Again inspired by \cite{cclm}, our second order method starts with a prediction step from the first order method
\begin{align*}
\frac{U^{n+1/2}(\tau,x)-U^{n}(\tau,x)}{\Delta t}+\frac{1}{\eps^2}\partial_\tau U^{n+1/2}(\tau,x)=&-\frac{1}{\eps}A(\tau)\partial_xU^{n+1/2}(\tau,x)\\
&+F^n(\tau,x),
\end{align*}
followed by the correction step
\begin{align*}
&\frac{U^{n+1}(\tau,x)-U^{n}(\tau,x)}{2\Delta t}+\frac{1}{2\eps^2}\left[\partial_\tau U^{n+1}(\tau,x)+\partial_\tau U^{n}(\tau,x)\right]\\
&=-\frac{1}{2\eps}A(\tau)\left[\partial_xU^{n+1}(\tau,x)+\partial_xU^{n}(\tau,x)\right]+F^{n+1/2}(\tau,x),
\end{align*}
where $F^{n+1/2}(\tau,x):=F(t_{n+1/2},\tau,U^{n+1/2}(\tau,x))$. Then by applying the Fourier pseudo-spectral discretization similarly as before, we end up with the following detailed second order scheme (UA2). Choosing $(u_1^0,u_2^0)^T=U^{\eps,0}$, then for $n=0,1,\ldots,$ and $l=-N/2,\ldots,$ $N/2-1$, 
\begin{subequations}\label{ua2}
  \begin{align}
  &A_\tau^+\widehat{(u_1^{n+1})}_0=A_\tau^-\widehat{(u_1^{n})}_0+\widehat{(f_1^{n+1/2})}_0,\ A_\tau^+\widehat{(u_2^{n+1})}_0=A_\tau^-\widehat{(u_2^{n})}_0+\widehat{(f_2^{n+1/2})}_0,\\
  &B_\tau^{+,l}\widehat{(u_1^{n+1})}_l=A_\tau^{+,l}\left[A_\tau^-\widehat{(u_1^{n})}_l+\widehat{(f_1^{n+1/2})}_l\right]-
  A_\tau^-\widehat{(u_2^{n})}_l-\widehat{(f_2^{n+1/2})}_l\nonumber\\
  &\qquad\qquad\qquad+\frac{i\mu_l\fe^{-2i\tau}}{2\eps}\widehat{(u_1^n)}_l-A_\tau^+\widehat{(u_2^n)}_l,\quad l\neq0,\\
  &\widehat{(u_2^{n+1})}_l=-\widehat{(u_2^n)}_l+\frac{2\eps\fe^{-2i\tau}}{i\mu_l}\left[A_\tau^-\widehat{(u_1^{n})}_l-A_\tau^+\widehat{(u_1^{n+1})}_l
  +\widehat{(f_1^{n+1/2})}_l\right],\quad l\neq0,
  \end{align}
\end{subequations}
with
\begin{align*}
  &A_\tau^{1/2}\widehat{(u_1^{n+1/2})}_0=\frac{1}{2\Delta t}\widehat{(u_1^{n})}_0+\widehat{(f_1^{n})}_0,\ A_\tau^{1/2}\widehat{(u_2^{n+1/2})}_0=\frac{1}{2\Delta t}\widehat{(u_2^{n})}_0+\widehat{(f_2^{n})}_0,\\
  &B_\tau^{1/2,l}\widehat{(u_1^{n+1/2})}_l=A_\tau^{1/2,l}\left[\frac{1}{2\Delta t}\widehat{(u_1^{n})}_l+\widehat{(f_1^{n})}_l\right]-\frac{1}{2\Delta t}\widehat{(u_2^{n})}_l-\widehat{(f_2^{n})}_l,\quad l\neq0,\\
  &\widehat{(u_2^{n+1/2})}_l=\frac{\eps\fe^{-2i\tau}}{i\mu_l}\left[-A_\tau^{1/2}\widehat{(u_1^{n+1/2})}_l+\frac{1}{2\Delta t}\widehat{(u_1^{n})}_l+\widehat{(f_1^{n})}_l\right],\quad l\neq0,
  \end{align*}
where
\begin{align*}
&A_\tau^\pm=\frac{Id}{\Delta t}\pm\frac{D_\tau}{2\eps^2},\ A_\tau^{+,l}=\frac{2\eps A_\tau^+\fe^{-2i\tau}}{i\mu_l},\ B_\tau^{+,l}=
A_\tau^{+,l}A_\tau^+-\frac{i\mu_l\fe^{-2i\tau}}{2\eps},\quad l\neq0,\\
&A_\tau^{1/2}=\frac{Id}{2\Delta t}+\frac{D_\tau}{\eps^2},\ A_\tau^{1/2,l}=\frac{\eps A_\tau^{1/2}\fe^{-2i\tau}}{i\mu_l},\ B_\tau^{1/2,l}=
A_\tau^{1/2,l}A_\tau^{1/2}-\frac{i\mu_l\fe^{-2i\tau}}{\eps}.
\end{align*}
Similarly, we remark that the inverse of matrices $A_\tau^+,A_\tau^{1/2},B_\tau^{+,l},B_\tau^{1/2,l}$ can be computed once for all, and by noticing that $B_\tau^{1/2,l}=-B_\tau^{1/2,-l}$ and $B_\tau^{+,l}=-B_\tau^{+,-l}$, we can only store the inverse of $B_\tau^{+,l}$ and $B_\tau^{1/2,l}$ for $l=-N/2,\ldots,-1$. The computational cost of the UA2 scheme is also $O(NN_\tau^2\log(N))$ per time step. 

Thanks to the semi-implicit approximations in time, the time step $\Delta t$ of UA1 and UA2 methods are free from any CFL-type conditions on $\Delta\tau$, $\Delta x$ or stability condition on $\eps$. After obtaining $U^{n}(\tau,x)$, we take $\tau=t_n/\eps^2$ and consider the inverse of (\ref{filter}) to get the approximation of $\Phi^\eps(t_n,x)$, i.e. $$\Phi^\eps(t_n,x)\approx \begin{pmatrix}\fe^{-it_n/\eps^2}&0\\ 0&\fe^{it_n/\eps^2}\end{pmatrix}U^{n}(t_n/\eps^2,x).$$

It is clear that the finite difference time discretisation used in UA2 gives formally second order approximation with error terms depending on $\partial_t^3U^\eps(t,\tau,x)$. Thus, with the prepared $U^\eps_5$ as the initial data which bounds $ \partial_t^3U^\eps$ for all $0<\eps\leq1$, a global uniform second order temporal error bound could be established at a finite time for UA2 similarly as the Theorem \ref{theo1}. We omit the detailed proof here and will illustrate this property by the coming numerical experiments.

\section{Numerical results}\label{sec:result}
For comparison purpose, we define the initial data
\begin{align*}
U_0^\eps(\tau,x):=\Phi_0(x),
\end{align*}
along with $U_1^\eps,U_2^\eps,U_3^\eps,U_4^\eps$ and $U_5^\eps$. We shall test the error of the proposed UA1 scheme (\ref{ua1}) and the UA2 scheme (\ref{ua2}) under those choices of initial data for (\ref{initial add}) by performing the following three numerical experiments. The computational domain is chosen as $(a,b)=(-8,8)$. The `exact' solution is obtained numerically by the UA2 scheme with very small step sizes, e.g. $\Delta t=10^{-6},\Delta x=1/64,N_\tau=64.$ We shall focus on the time discretization error.

\emph{Example I: (nonlinear without magnetic potential)}
We take the potential and initial data in (\ref{nlsw}) as
$$V_e=\frac{1-x}{2+2x^2},\quad V_m=0,\quad \phi_1^\eps(t=0)=\frac{\fe^{-x^2}}{\sqrt{2}},\quad \phi_2^\eps(t=0)=\fe^{-\sqrt{2}x^2},$$
and choose $\lambda=0.5$. We solve the Dirac equation (\ref{nlsw trun}) till $t=0.5$, and show the error of the numerical solution $(\phi_1^n,\phi_2^n)$ with $n=t/\Delta t$ as
$$\|\phi_1^\eps(t)-\phi_1^n\|_{l^\infty}+\|\phi_2(t)-\phi_2^n\|_{l^\infty}.$$
The time discretization errors of the UA1 and UA2 methods under different $\eps$ are shown in Fig. \ref{fig:err1} and Fig. \ref{fig:err2}, respectively, where we take the spatial mesh size of the numerical method small enough, e.g. $\Delta x=1/64,N_\tau=32$. %The corresponding results of TSFP are shown in Fig. \ref{fig:errTS1}.
\begin{figure}[h]
$$
\begin{array}{cc}
\psfig{figure=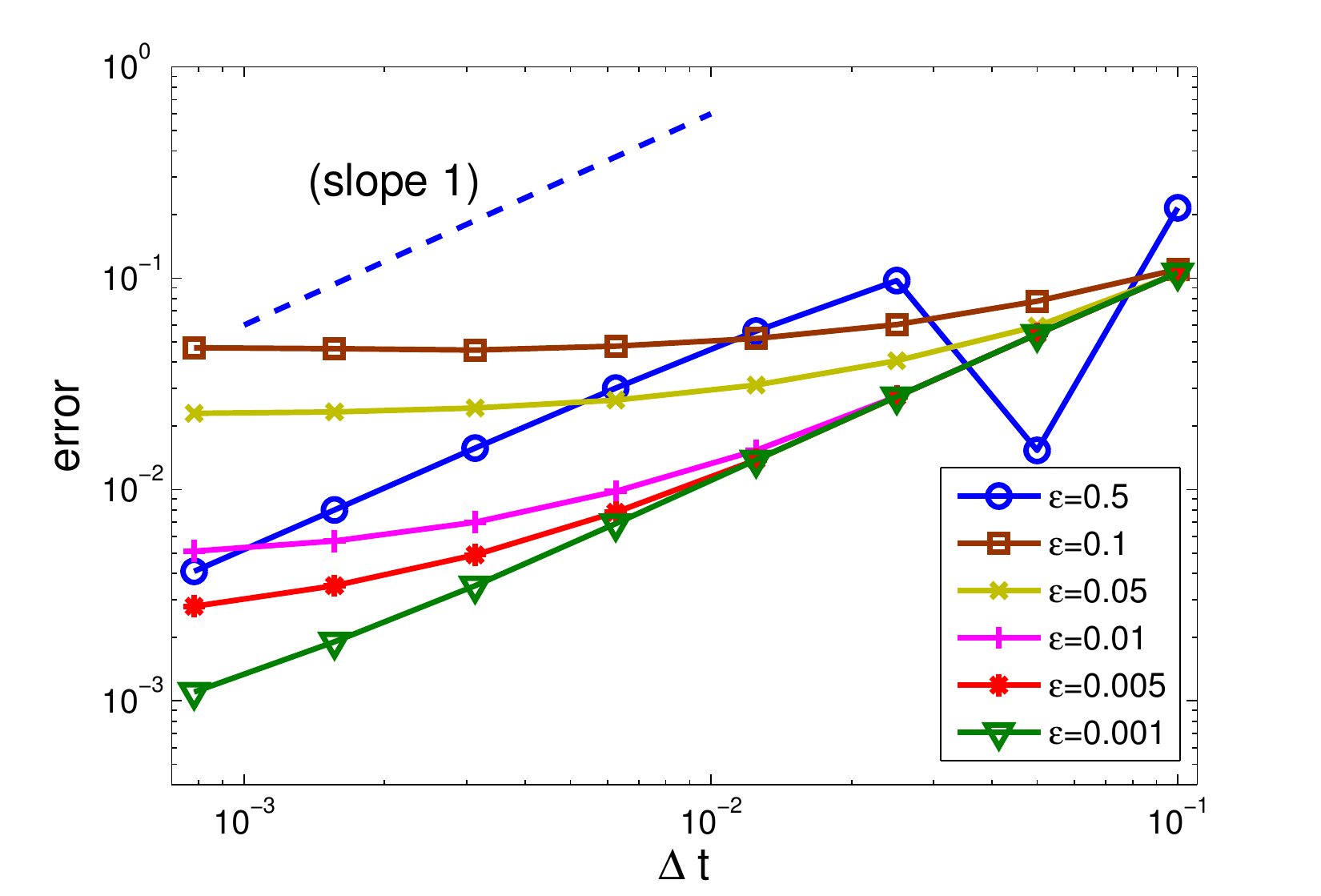,height=4.3cm,width=5.5cm}&\psfig{figure=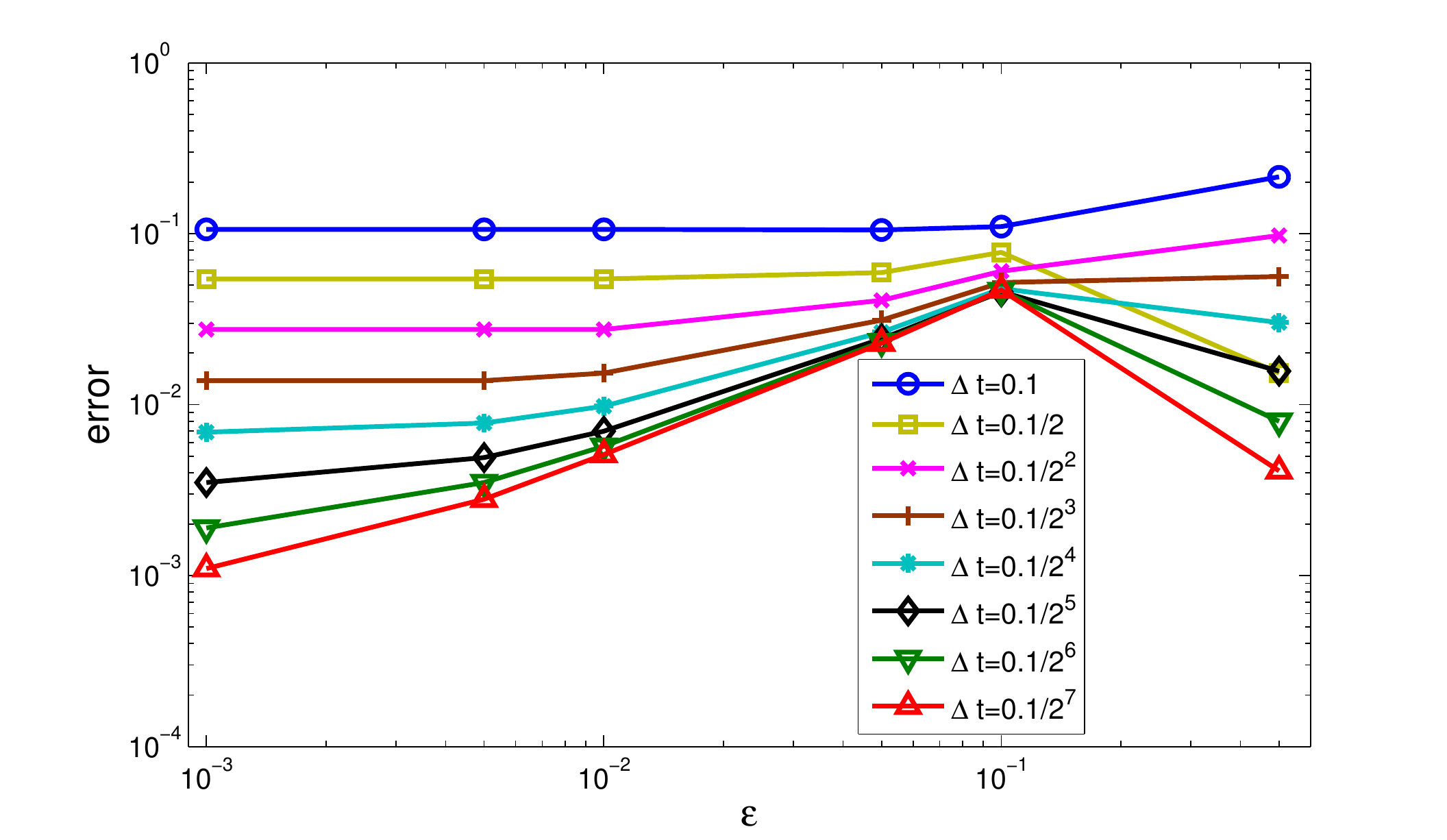,height=4.3cm,width=5.5cm}\\
\psfig{figure=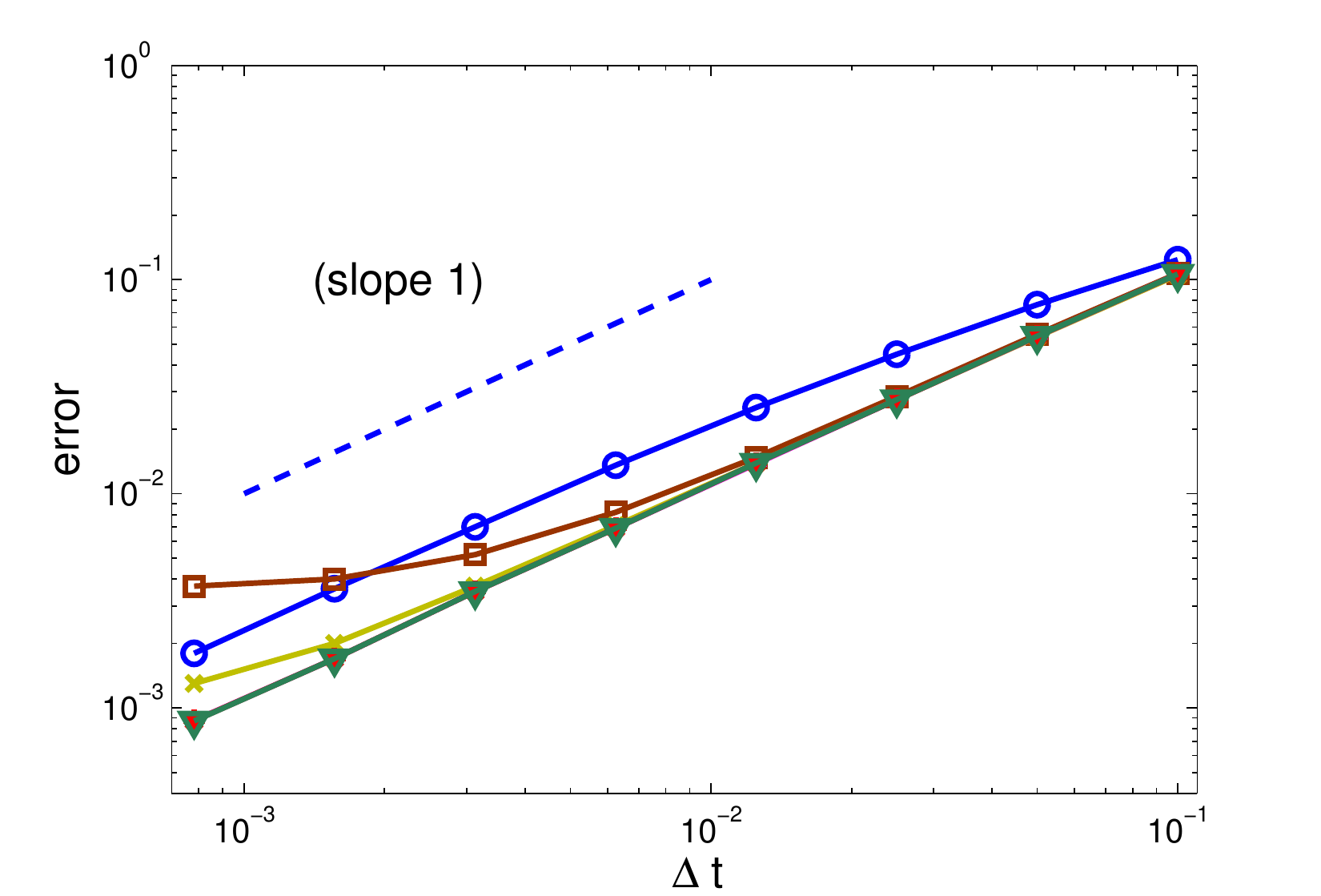,height=4.3cm,width=5.5cm}&\psfig{figure=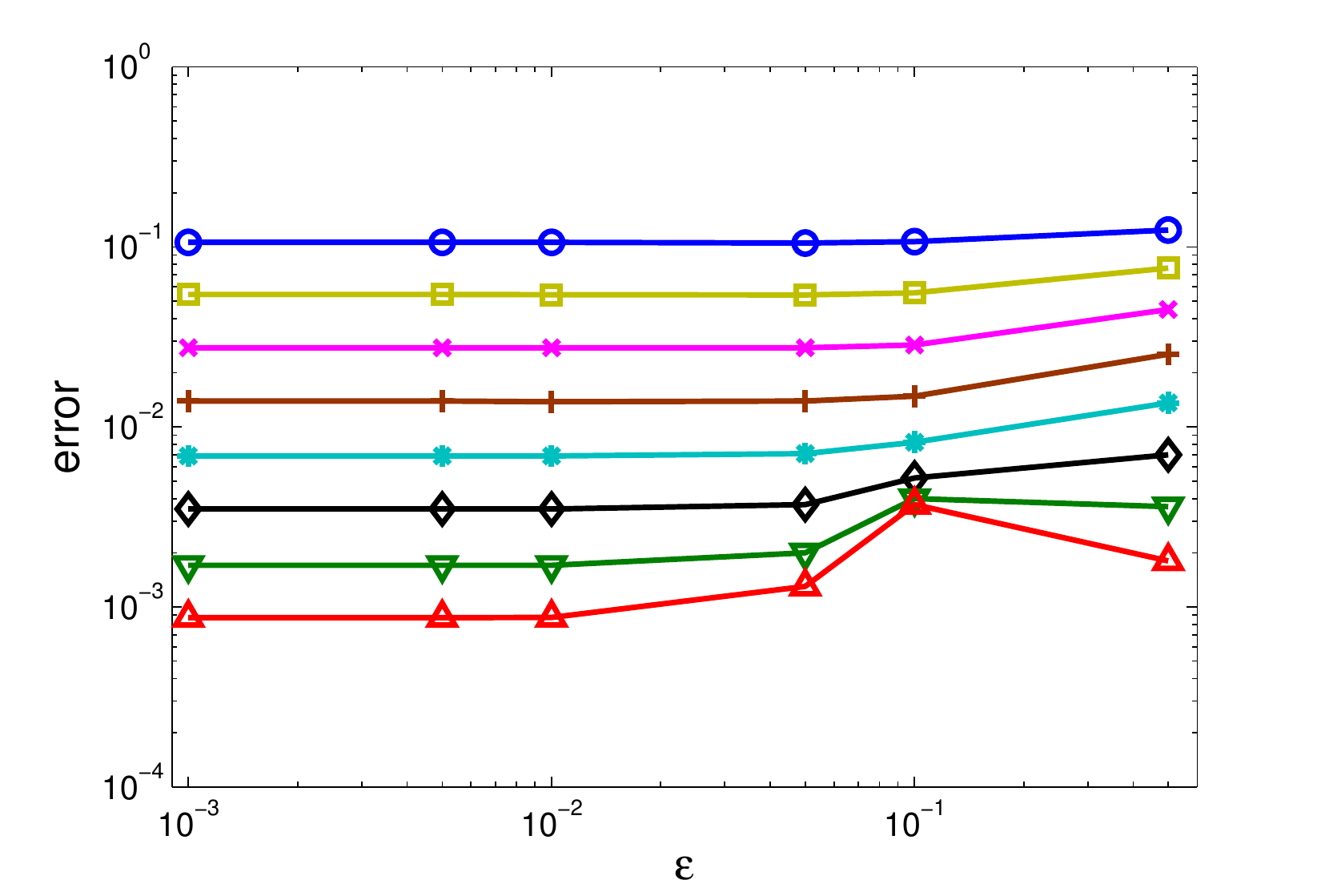,height=4.3cm,width=5.5cm}\\
\psfig{figure=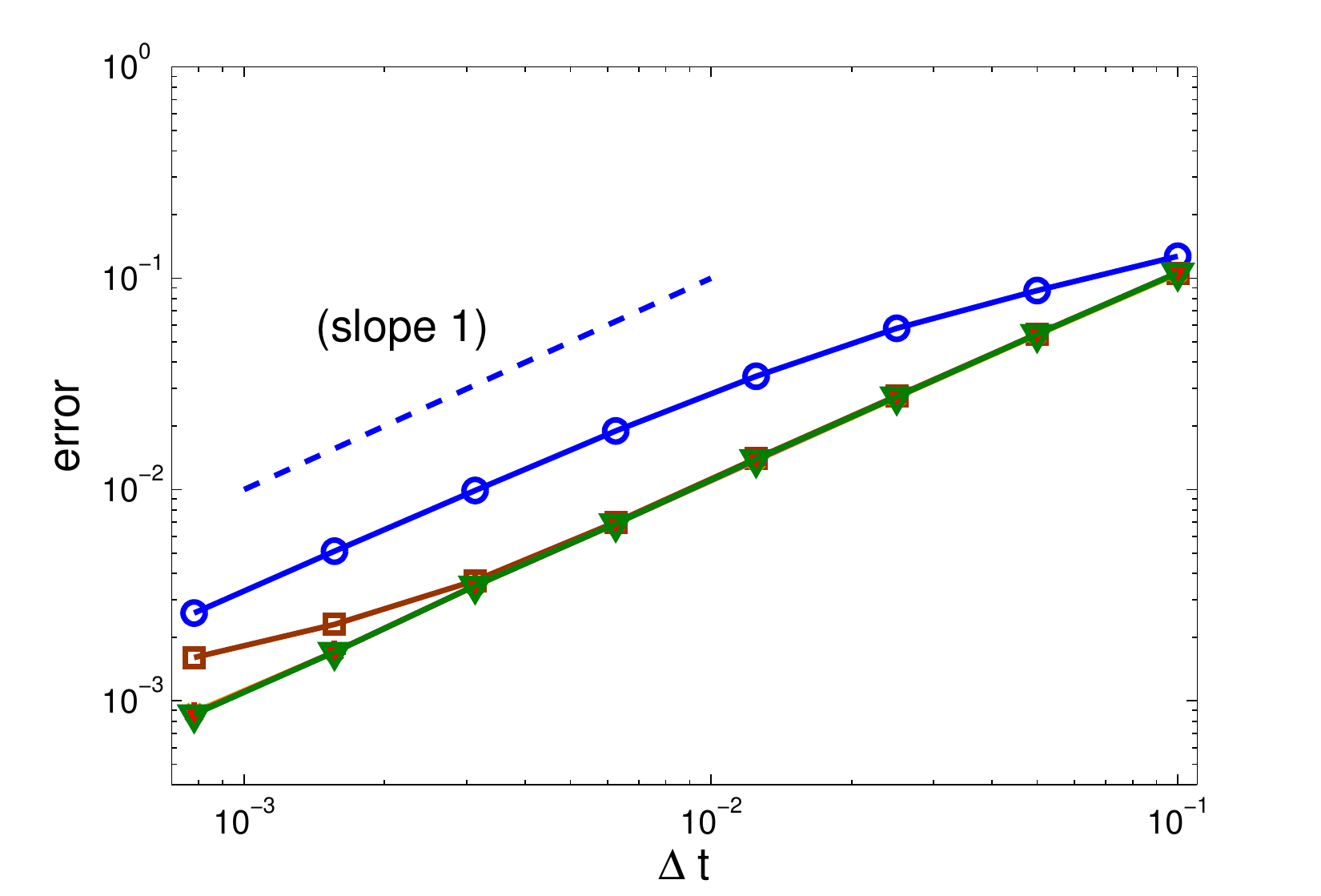,height=4.3cm,width=5.5cm}&\psfig{figure=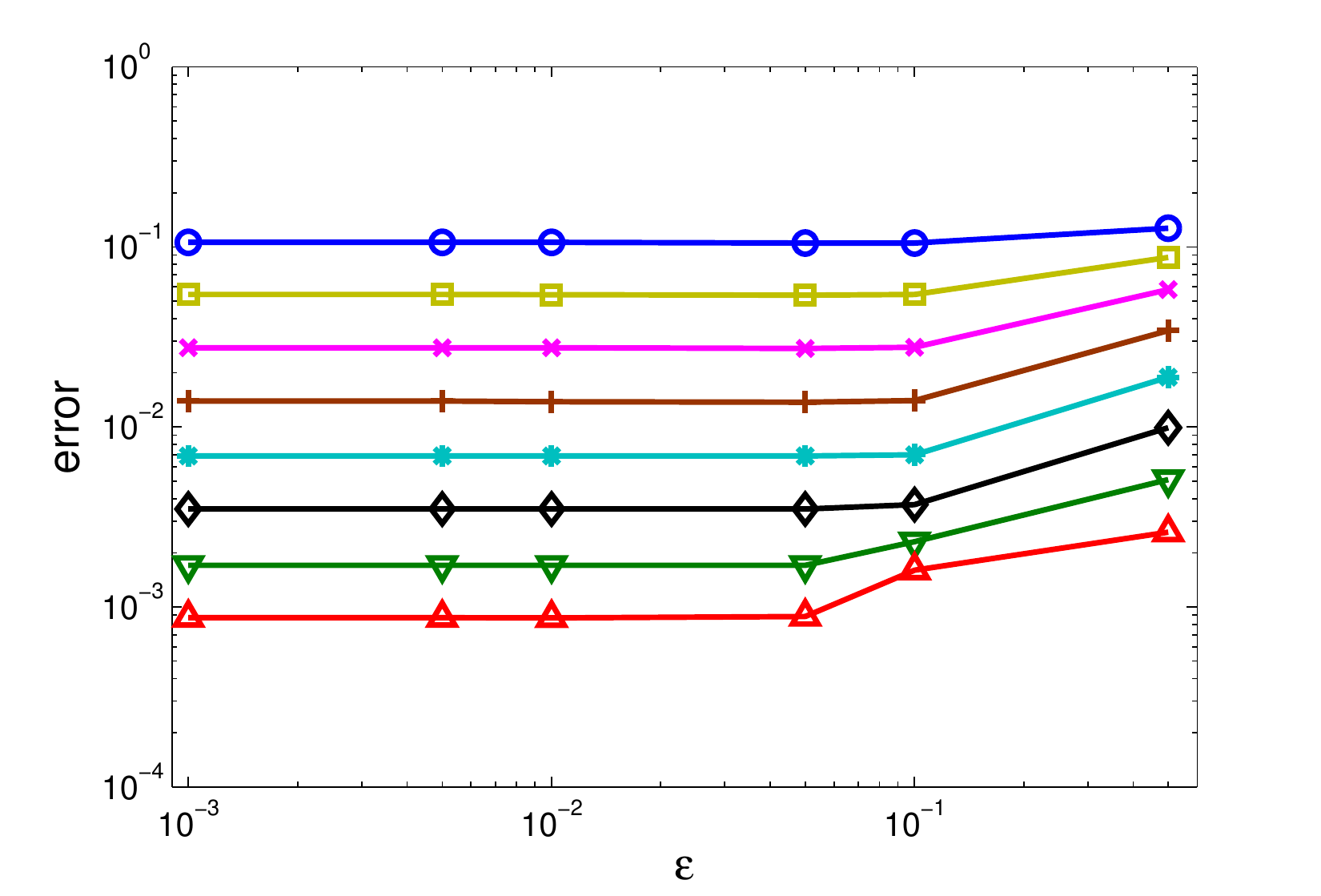,height=4.3cm,width=5.5cm}\\
\psfig{figure=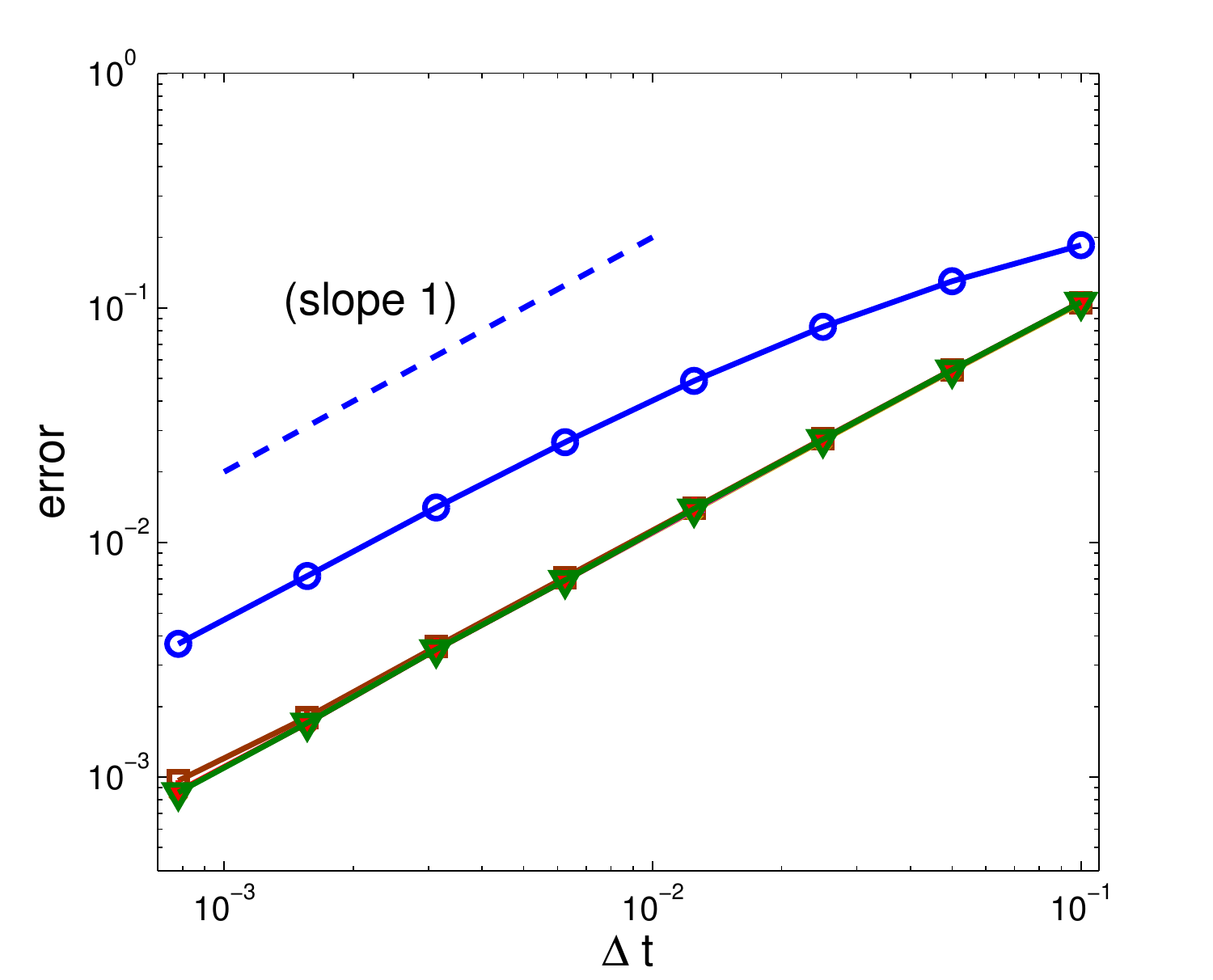,height=4.3cm,width=5.5cm}&\psfig{figure=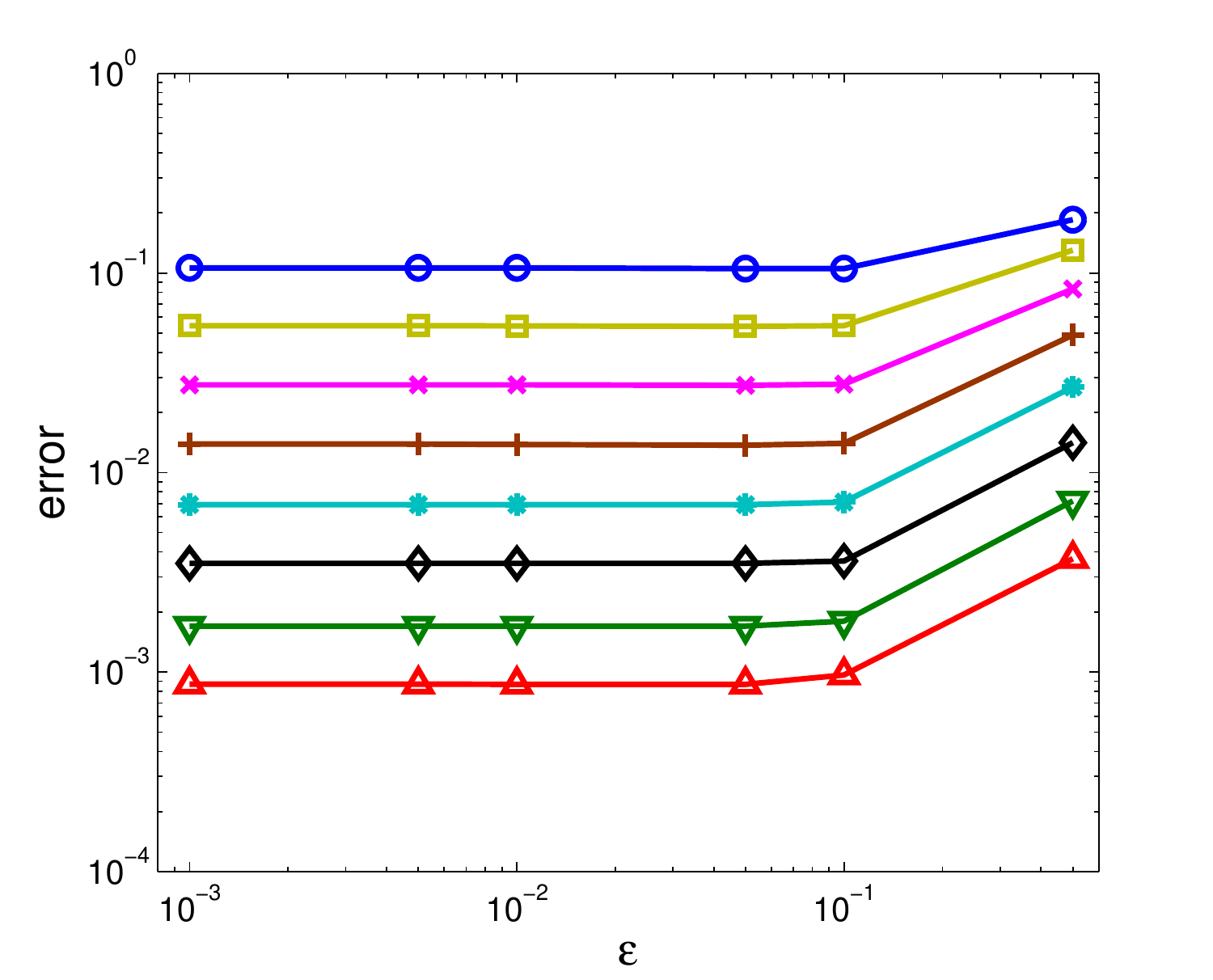,height=4.3cm,width=5.5cm}
\end{array}
$$
\caption{Temporal error of the UA1 method in Example I with respect to $\Delta t$ and $\eps$: results of using $U_0^\eps$ (first row); results of using $U_1^\eps$ (second row) results of using $U_2^\eps$ (third row);  results of using $U_3^\eps$ (last row).}\label{fig:err1}
\end{figure}

\begin{figure}[h!]
$$
\begin{array}{cc}
\psfig{figure=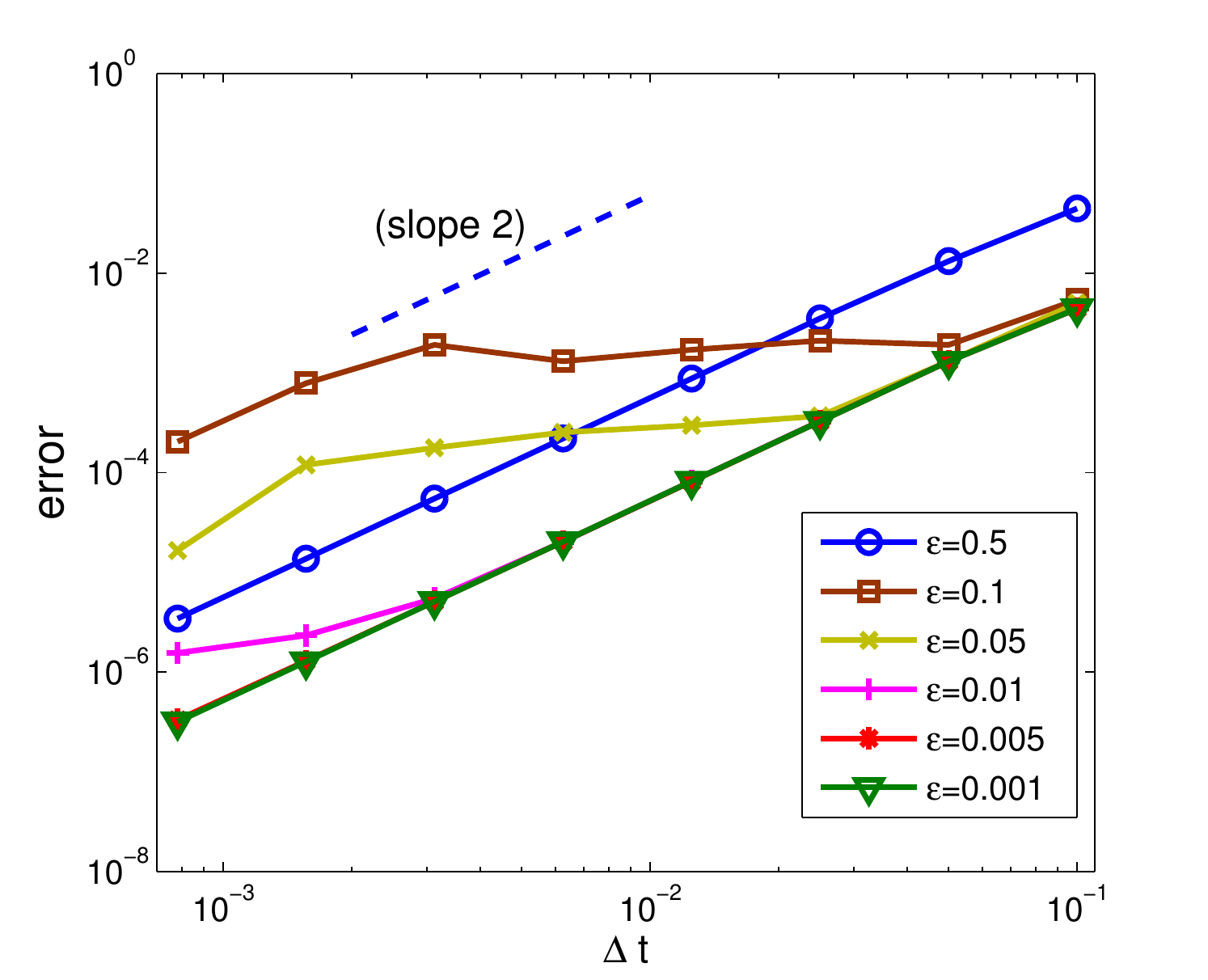,height=4.3cm,width=5.5cm}&\psfig{figure=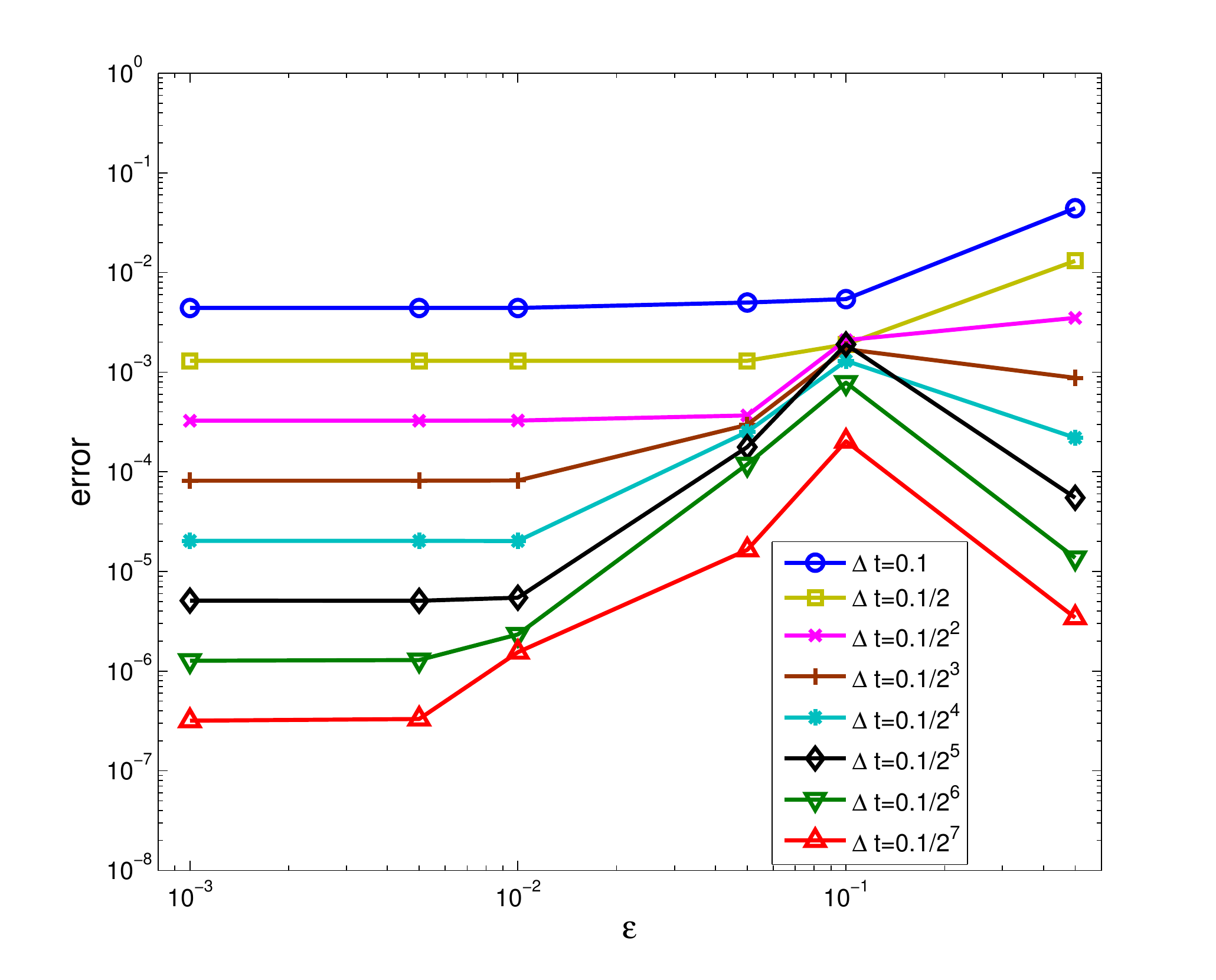,height=4.3cm,width=5.5cm}\\
\psfig{figure=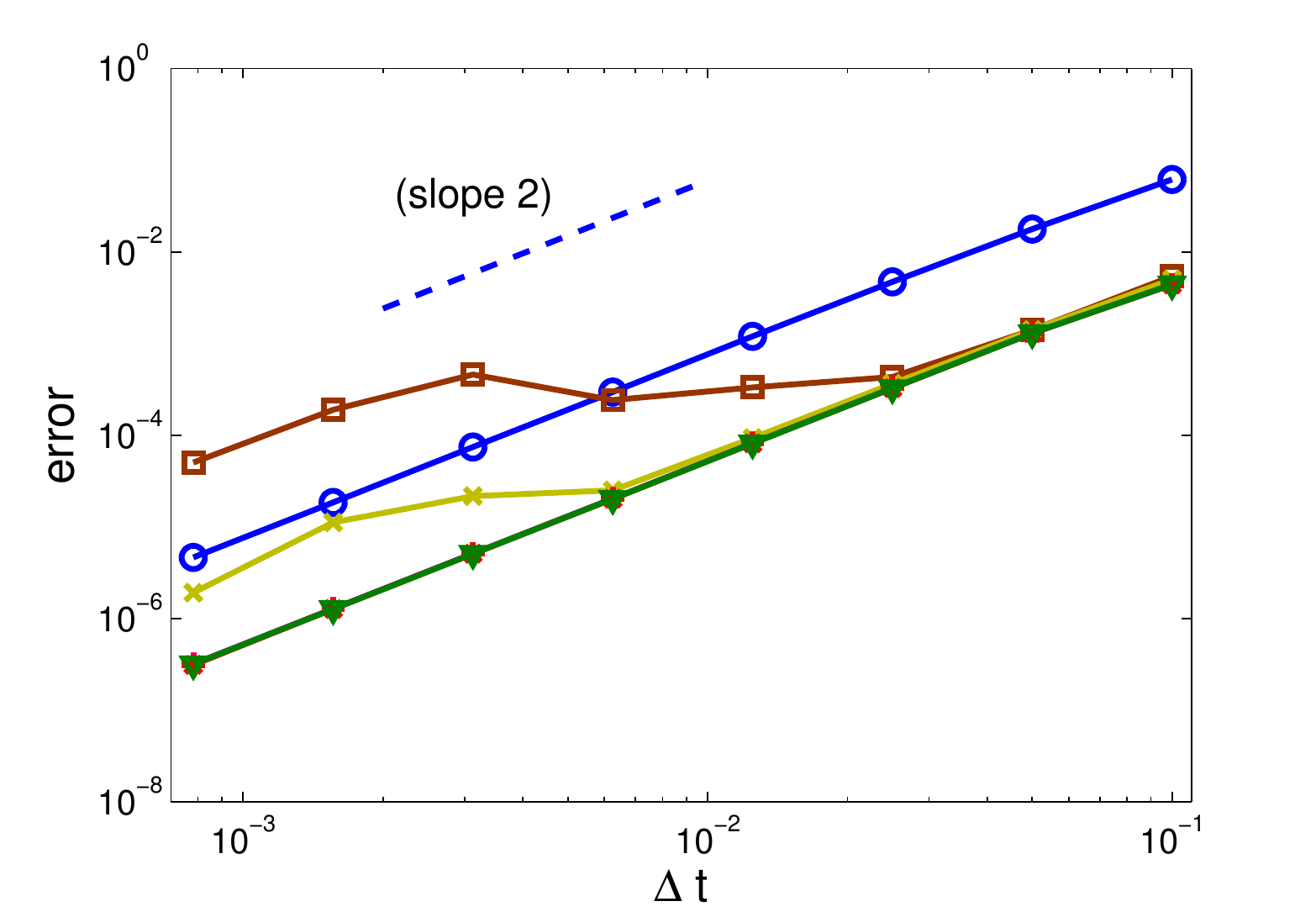,height=4.3cm,width=5.5cm}&\psfig{figure=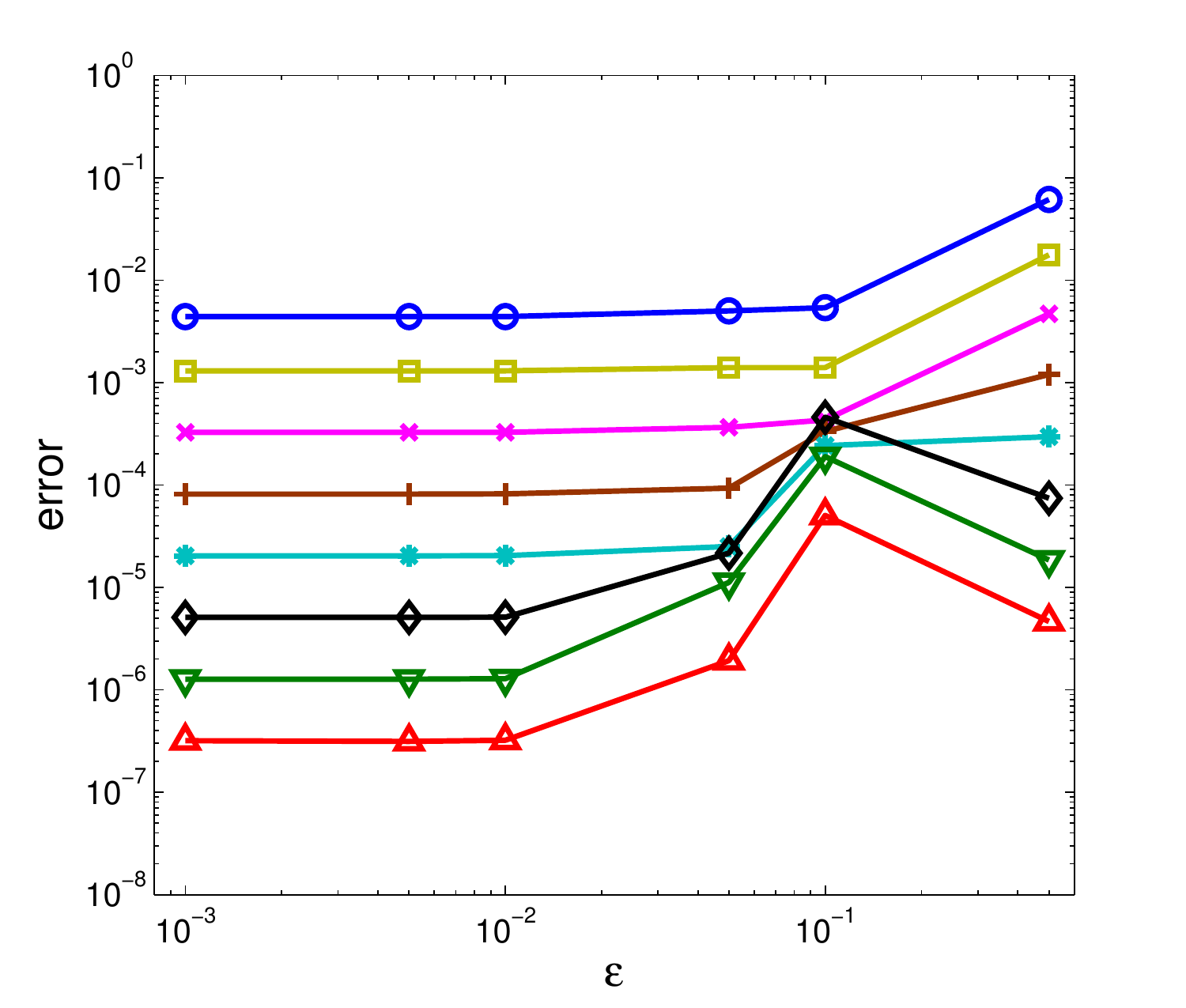,height=4.3cm,width=5.5cm}\\
\psfig{figure=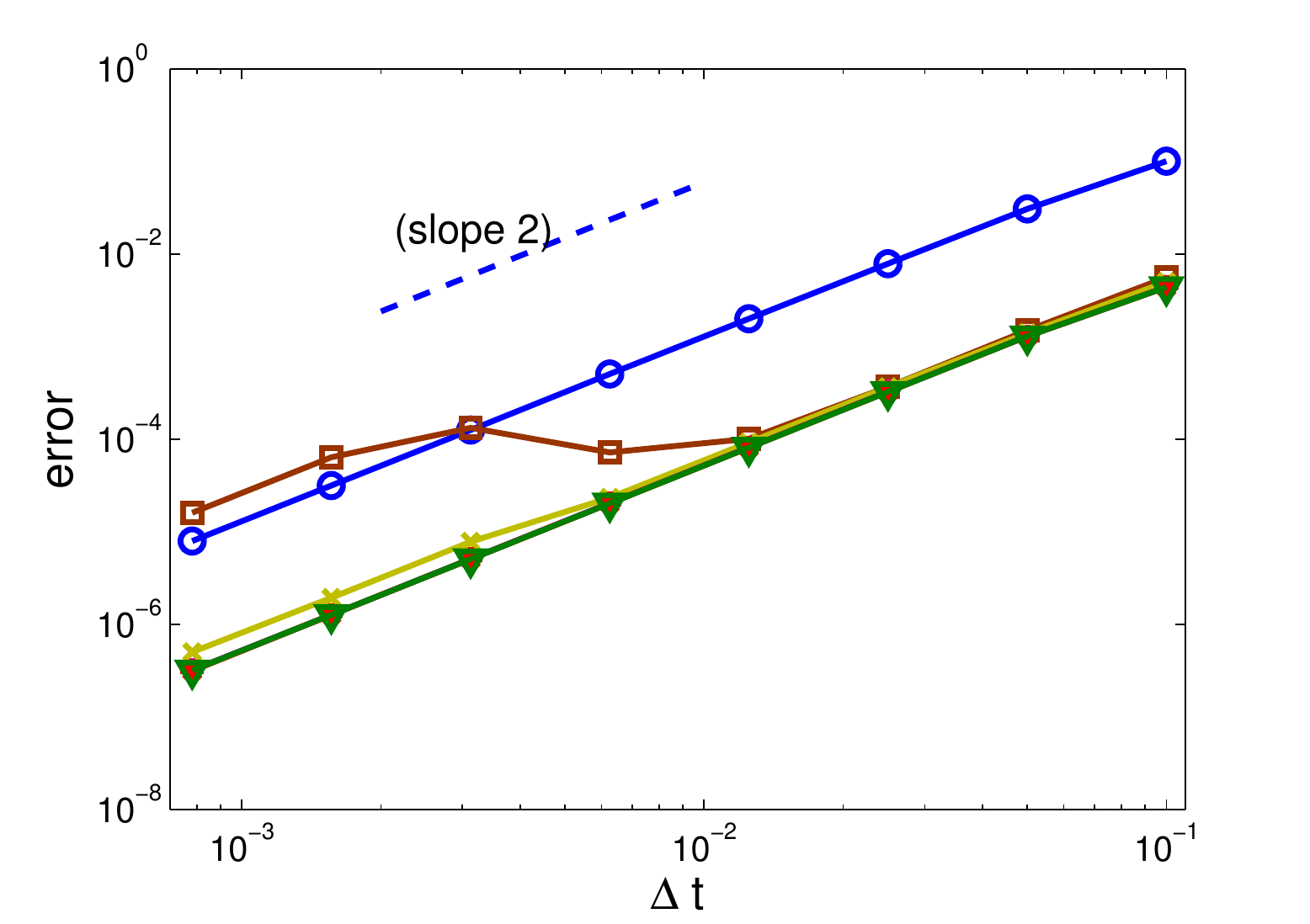,height=4.3cm,width=5.5cm}&\psfig{figure=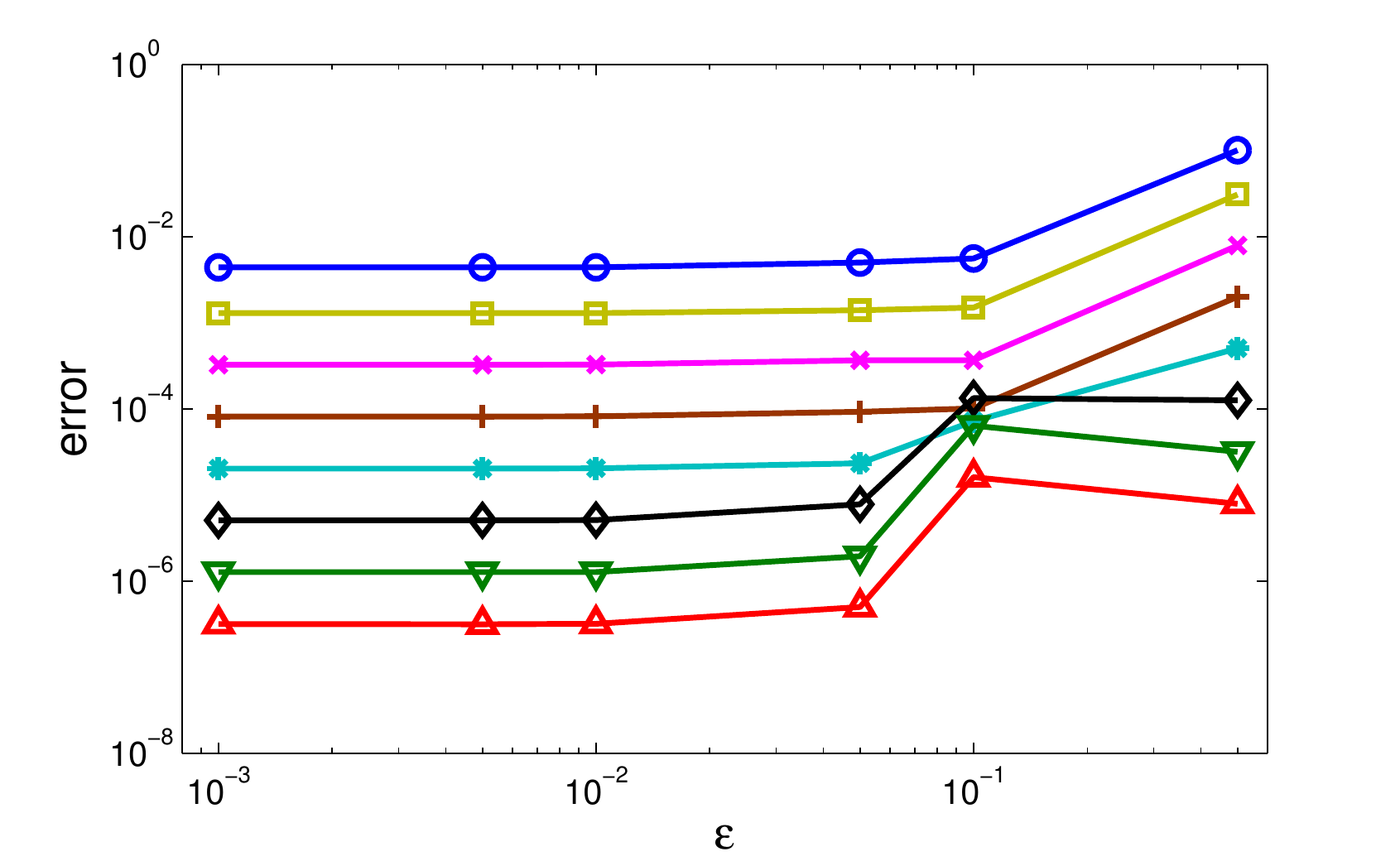,height=4.3cm,width=5.5cm}\\
\psfig{figure=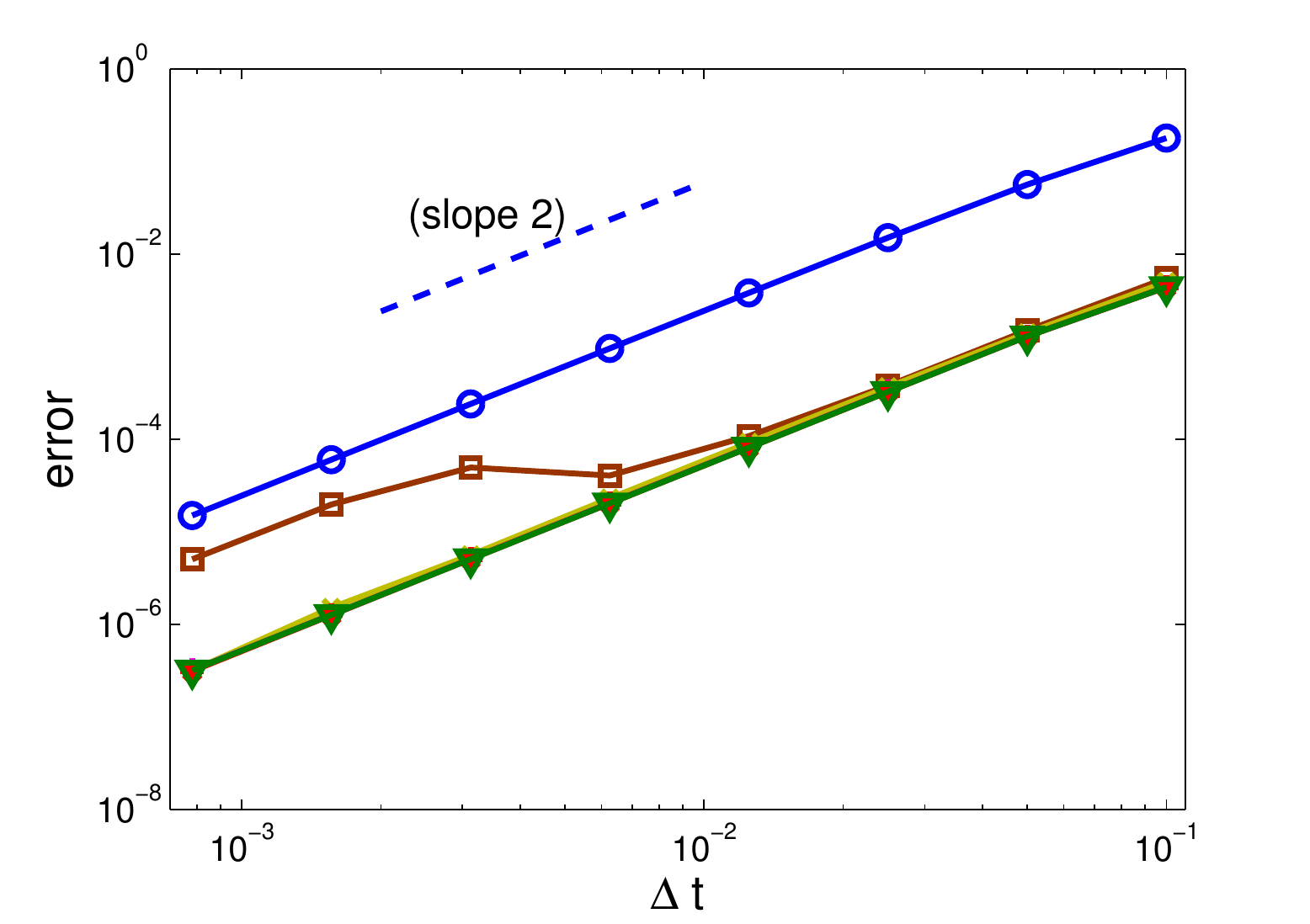,height=4.3cm,width=5.5cm}&\psfig{figure=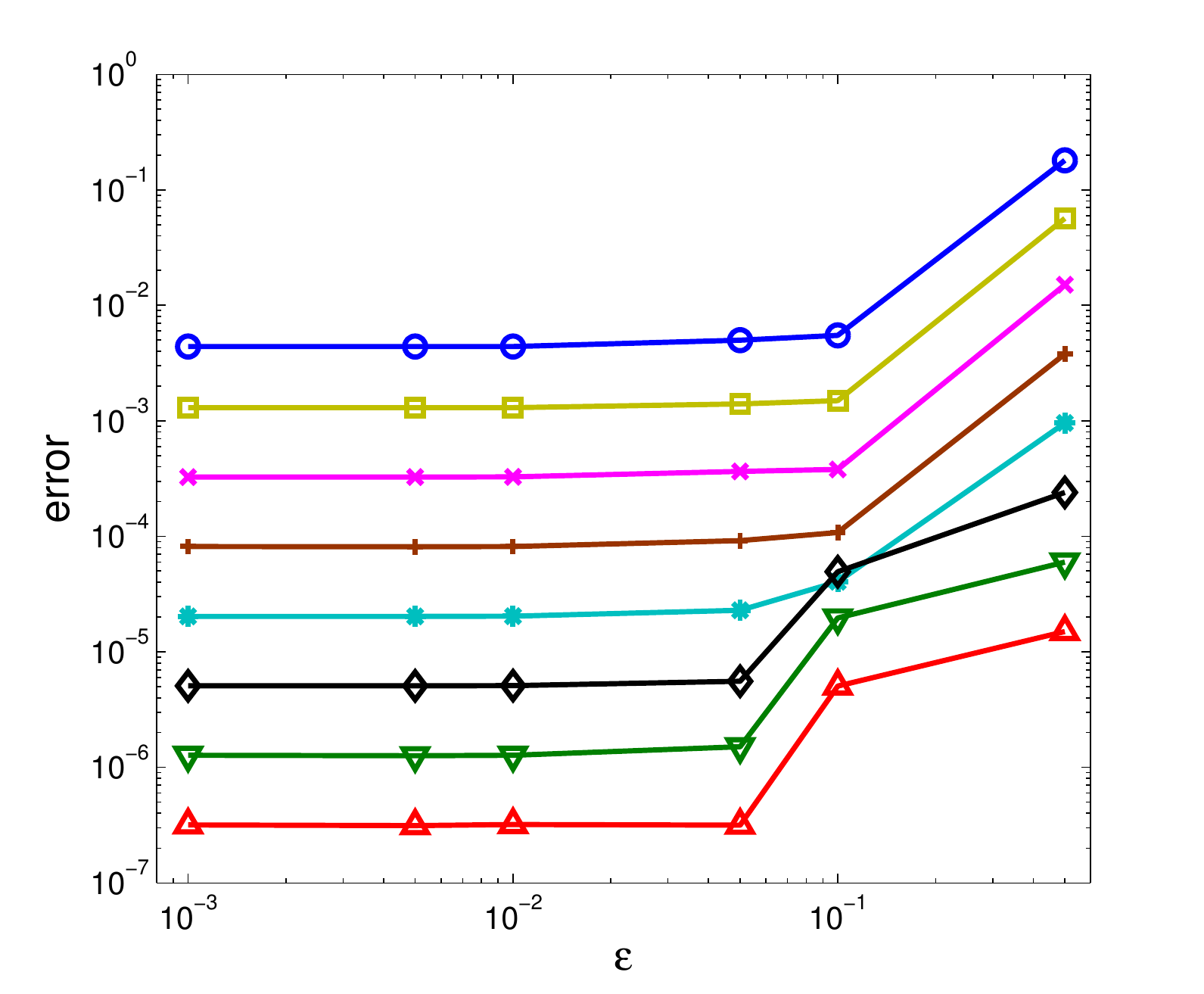,height=4.3cm,width=5.5cm}
\end{array}
$$
\caption{Temporal error of the UA2 method in Example I with respect to $\Delta t$ and $\eps$: results of using $U_2^\eps$ (first row); results of using $U_3^\eps$ (second row) results of using $U_4^\eps$ (third row);  results of using $U_5^\eps$ (last row).}\label{fig:err2}
\end{figure}

%\begin{figure}[h!]
%$$
%\begin{array}{cc}
%\psfig{figure=nlnmTS-eps-converted-to.pdf,height=4.3cm,width=5.5cm}&\psfig{figure=nlnmTS1-eps-converted-to.pdf,height=4.3cm,width=5.5cm}
%\end{array}
%$$
%\caption{Temporal error of TSFP of Example I with respect to $\Delta t$ and $\eps$: results of the nonlinear Dirac equation without magnetic potential case.}\label{fig:errTS1}
%\end{figure}

\emph{Example II: (linear with magnetic potential)}
We use the same setup in Example I but without nonlinearity and with a non-zero magnetic potential, i.e.
$$V_m=\frac{(x+1)^2}{1+x^2},\qquad \lambda=0.$$
 The temporal errors of the UA2 method at $t=0.5$ under different $\eps$ are shown in Fig. \ref{fig:errmlUA}.
% \begin{figure}[h!]
%$$
%\begin{array}{cc}
%\psfig{figure=mTS-eps-converted-to.pdf,height=4.3cm,width=5.5cm}&\psfig{figure=mTS1-eps-converted-to.pdf,height=4.3cm,width=5.5cm}
%\end{array}
%$$
%\caption{Temporal error of TSFP in Example II with respect to $\Delta t$ and $\eps$: results of the linear Dirac equation with magnetic potential case.}\label{fig:errmlTS}
%\end{figure}

\begin{figure}[h!]
$$
\begin{array}{cc}
\psfig{figure=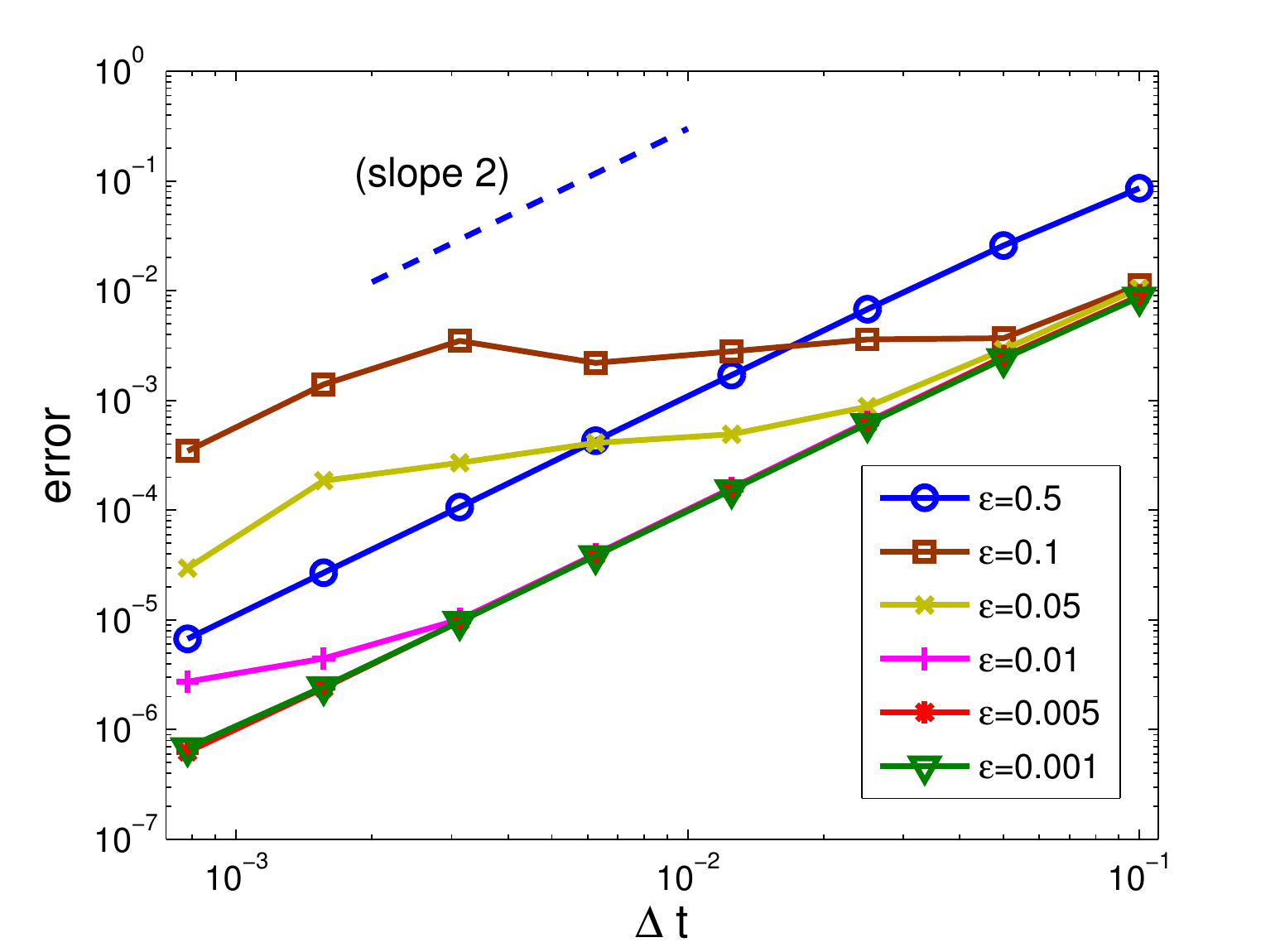,height=4.3cm,width=5.5cm}&\psfig{figure=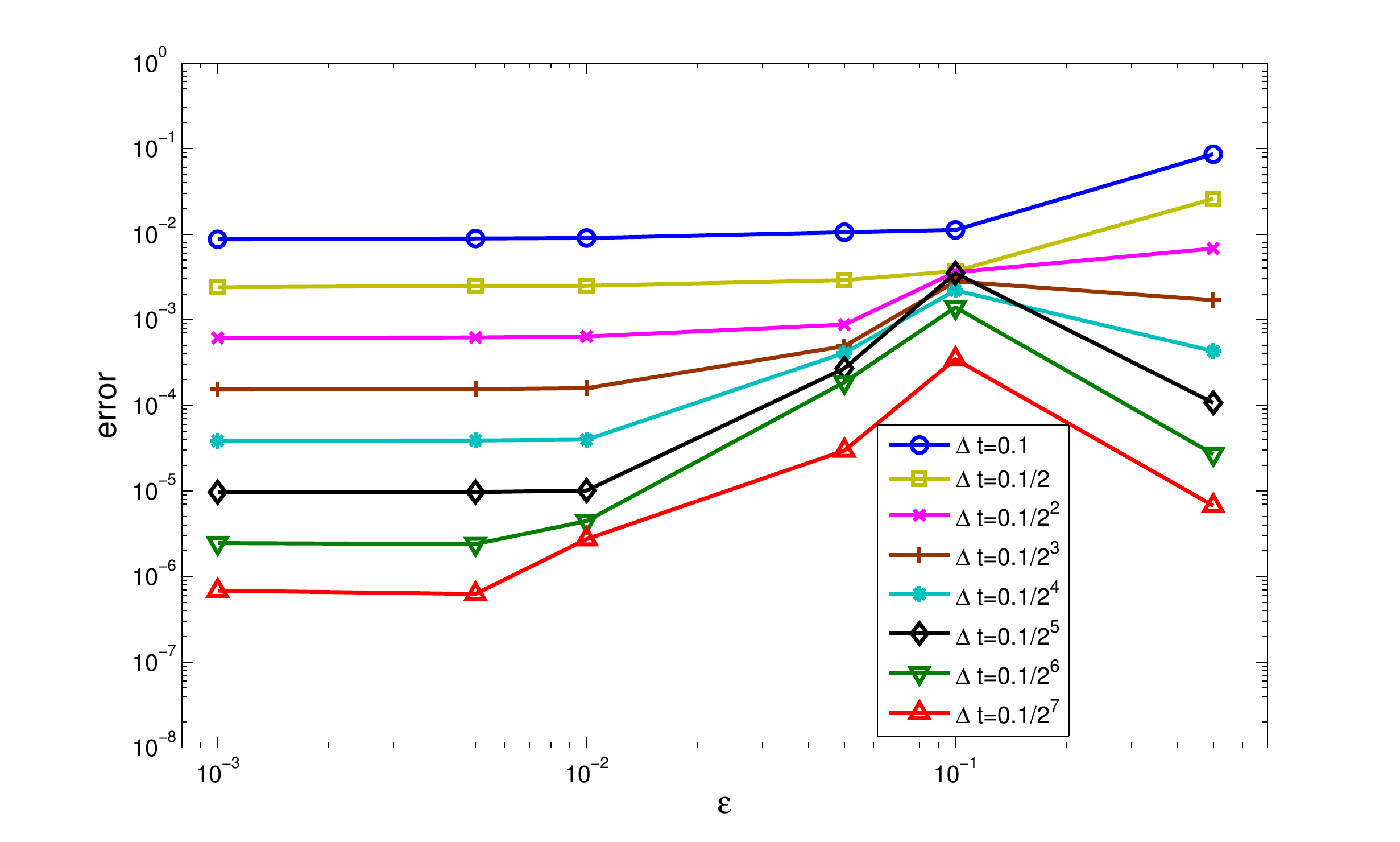,height=4.3cm,width=5.5cm}\\
\psfig{figure=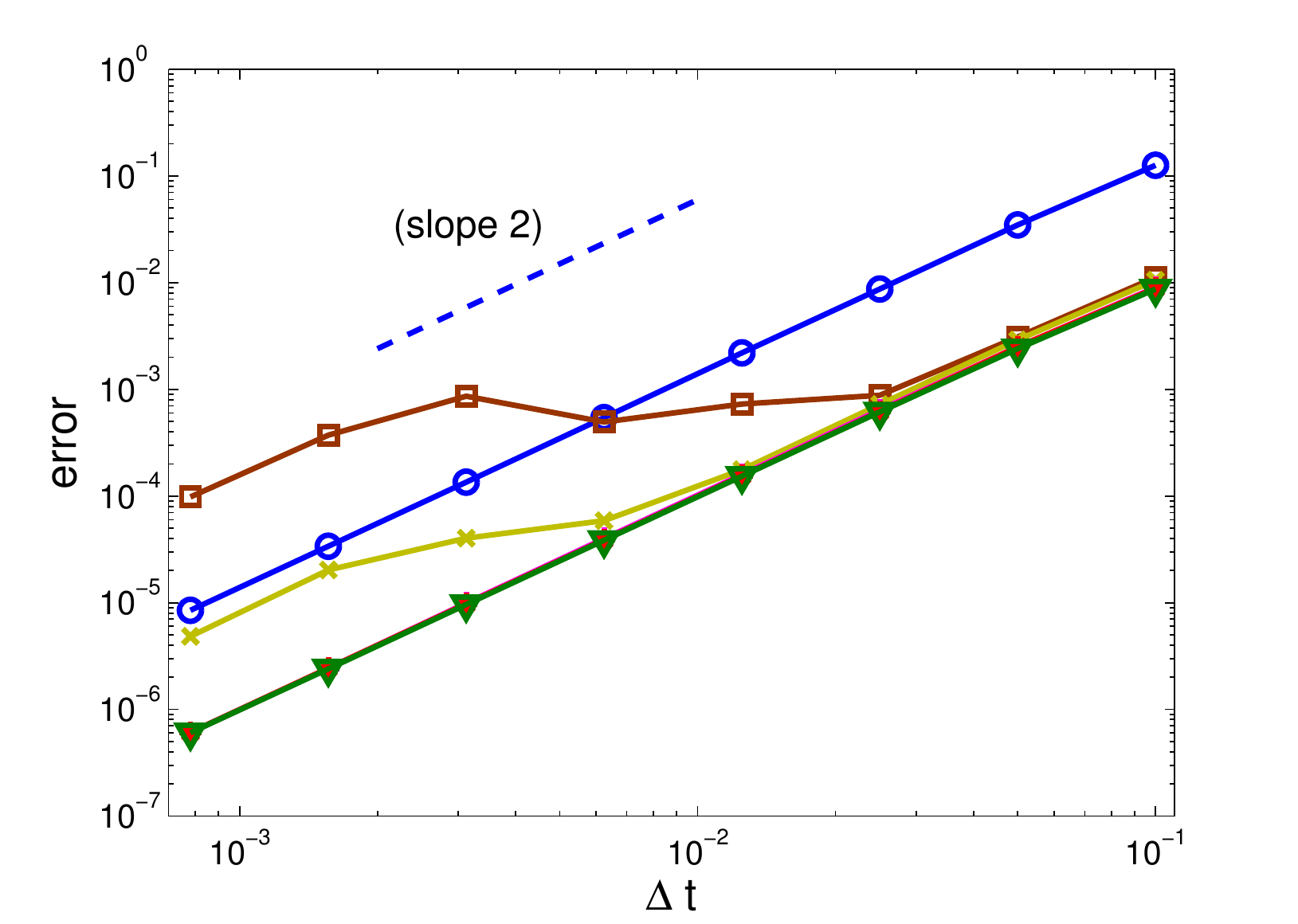,height=4.3cm,width=5.5cm}&\psfig{figure=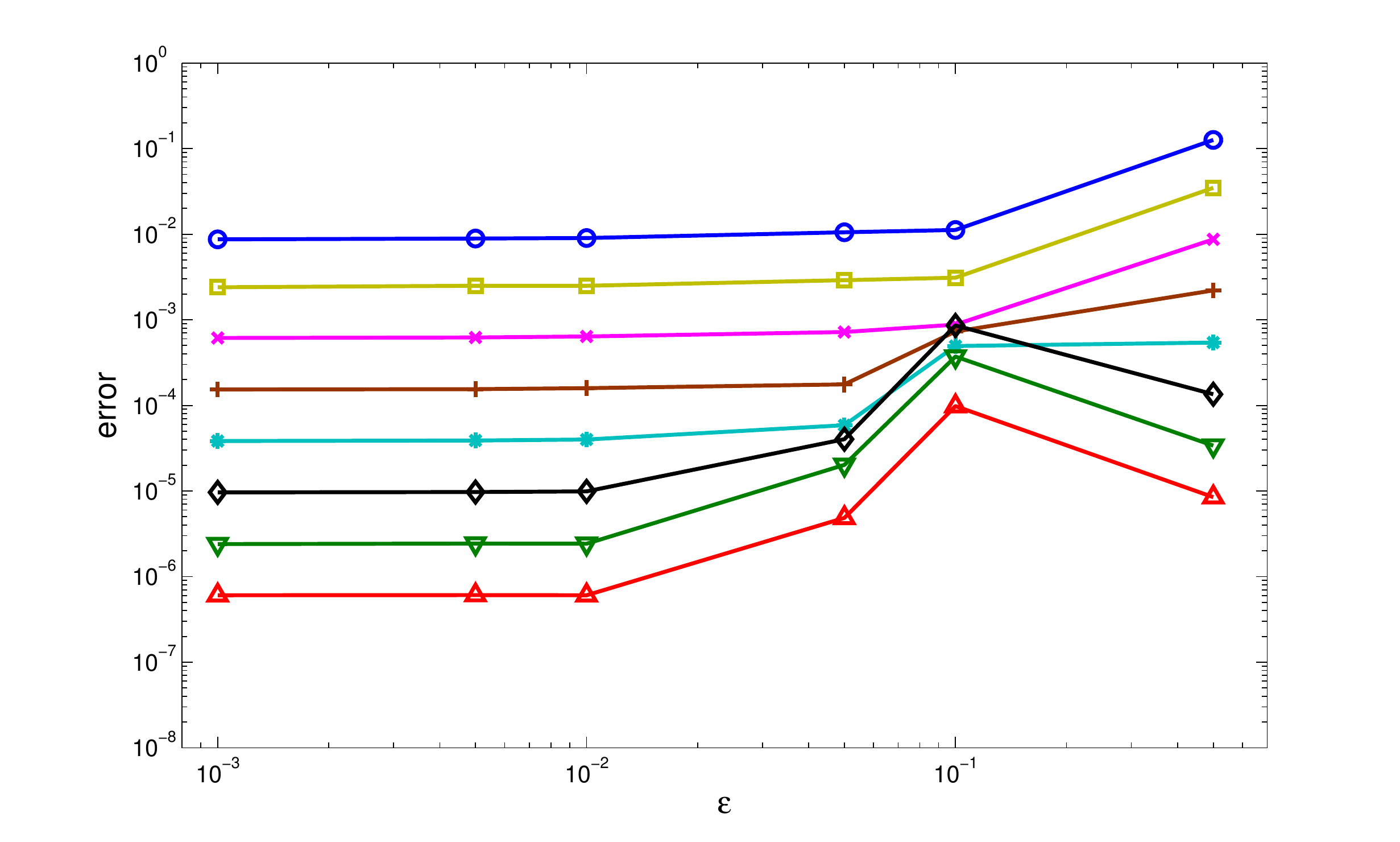,height=4.3cm,width=5.5cm}\\
\psfig{figure=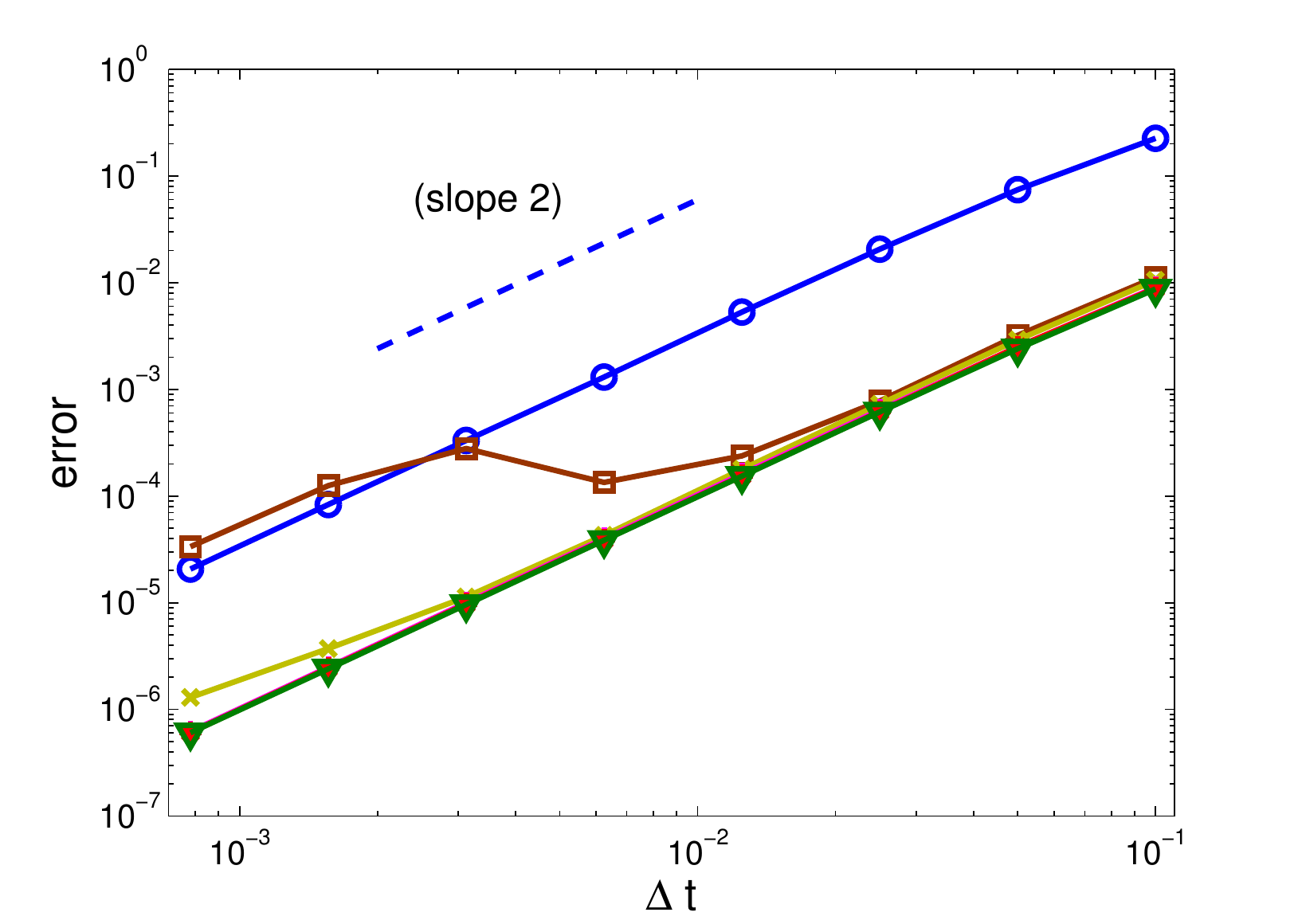,height=4.3cm,width=5.5cm}&\psfig{figure=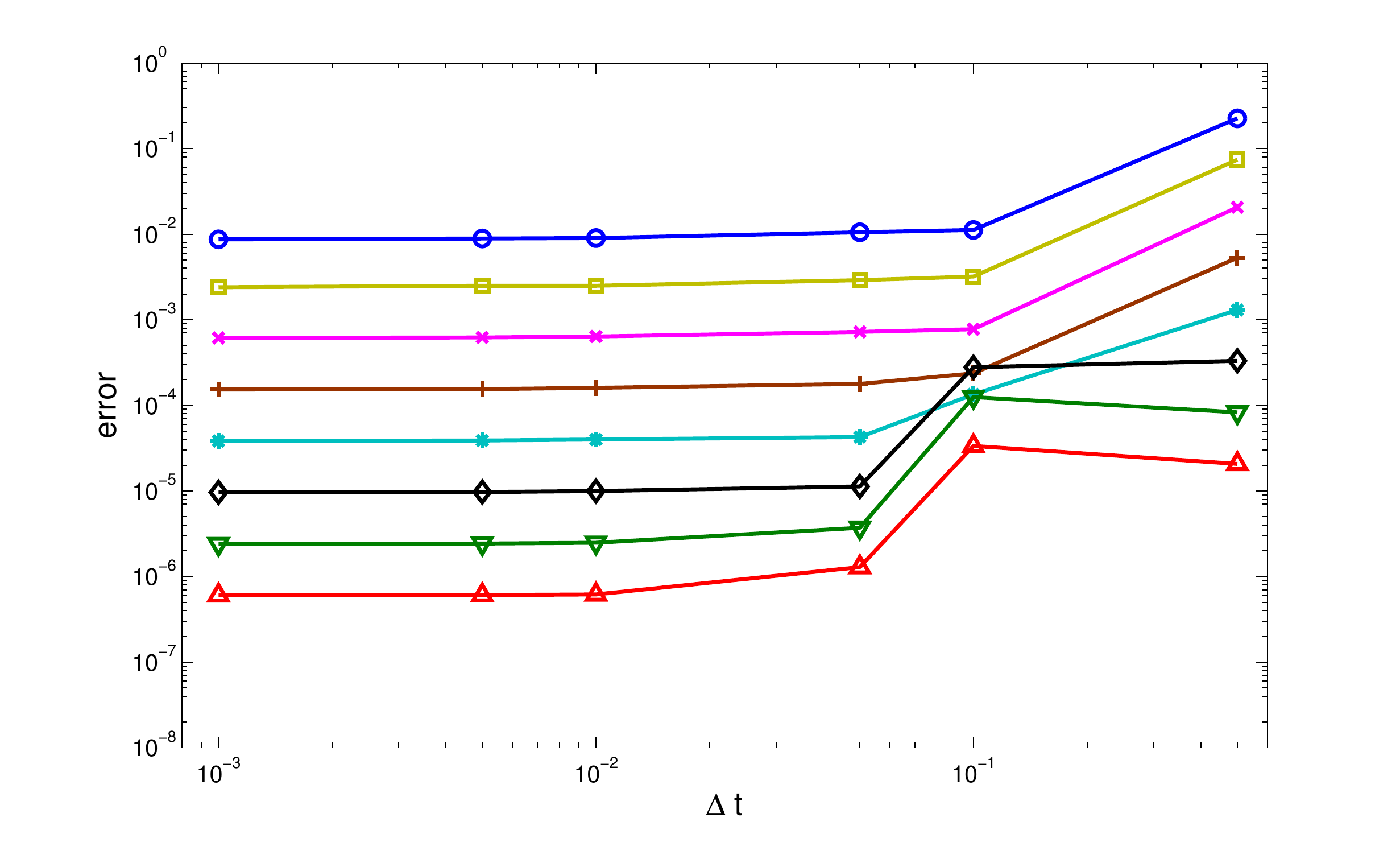,height=4.3cm,width=5.5cm}\\
\psfig{figure=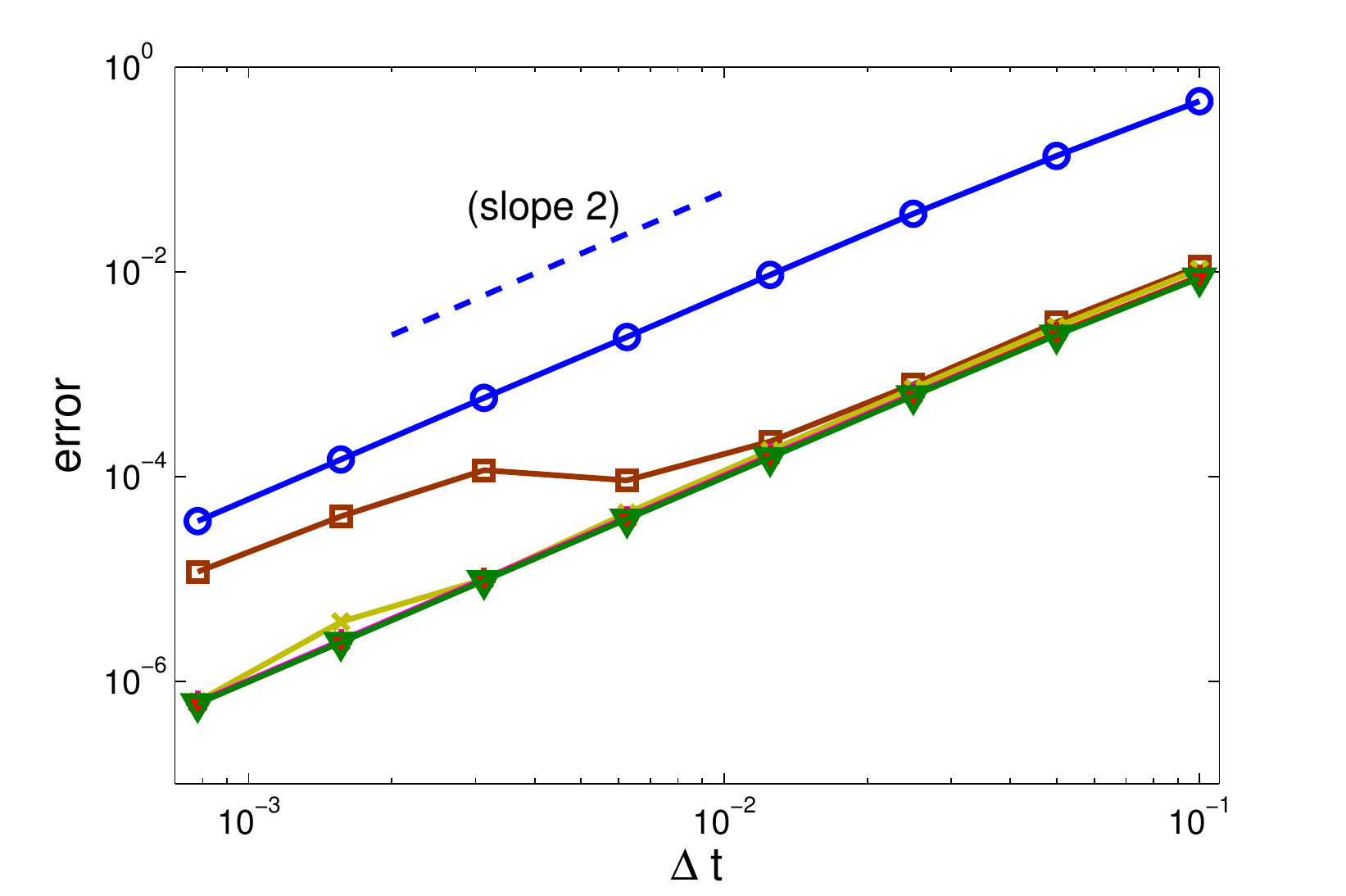,height=4.3cm,width=5.5cm}&\psfig{figure=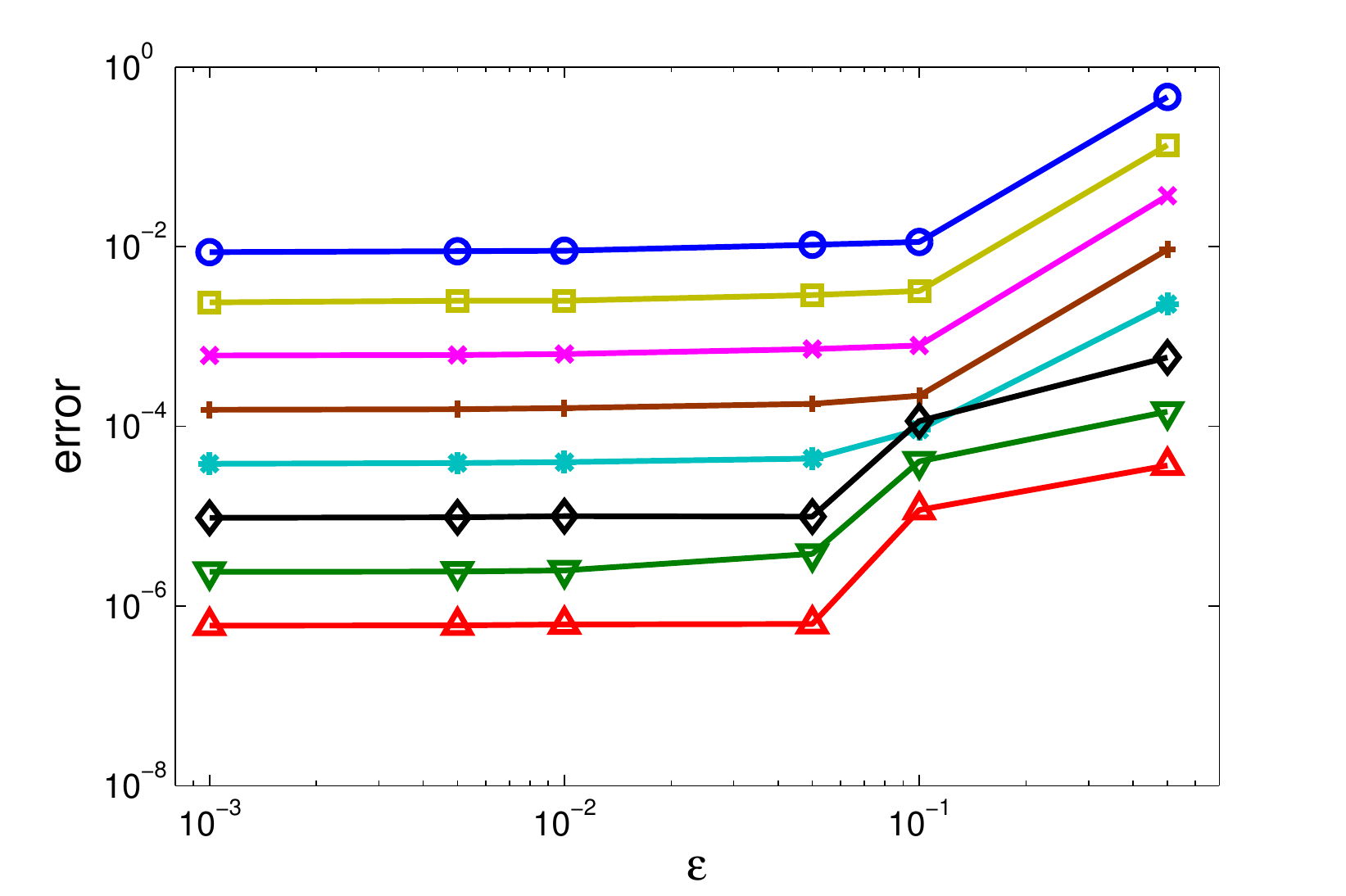,height=4.3cm,width=5.5cm}
\end{array}
$$
\caption{Temporal error of the UA2 method in Example II with respect to $\Delta t$ and $\eps$: results of using $U_2^\eps$ (first row); results of using $U_3^\eps$ (second row) results of using $U_4^\eps$ (third row);  results of using $U_5^\eps$ (last row).}\label{fig:errmlUA}
\end{figure}

\emph{Example III: (nonlinear with magnetic potential)}
We keep both the nonlinearity and magnetic potential, i.e.
$$V_m=\frac{(x+1)^2}{1+x^2},\qquad \lambda=0.5.$$
 The time discretization error of the UA2 method at $t=0.5$, with initial data $U^\eps_5$, is shown in Fig. \ref{fig:mnlUA}. The corresponding spatial errors of the UA2 method with respect to $\Delta x$ and $\Delta \tau$ are shown in Fig. \ref{fig:space}. The behaviour of the spatial errors of the UA1 method and of the other two numerical examples are similar, so the results are omitted here for brevity. 

 The error between the solution of the nonlinear Dirac equation (\ref{nlsw}) and the solution of the limit model (\ref{lm1}), i.e.
 $$\|\phi_1^\eps(t)-\phi_1(t)\|_{L^\infty}+\|\phi_2^\eps(t)-\phi_2(t)\|_{L^\infty},$$
 are shown in Fig. \ref{fig:rate} at $t=0.5$.

 \begin{figure}[h!]
$$
\begin{array}{cc}
\psfig{figure=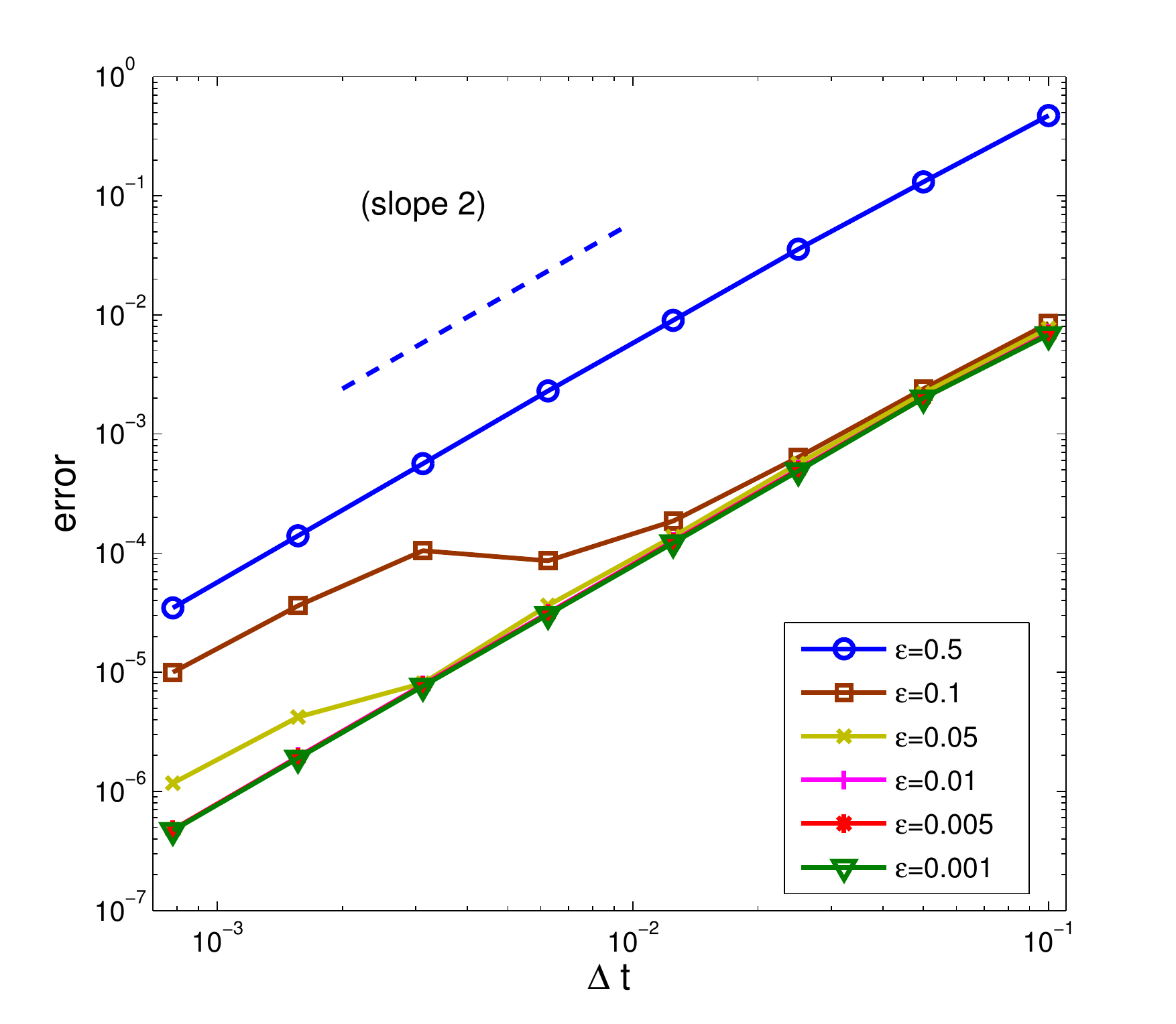,height=4.3cm,width=5.5cm}&\psfig{figure=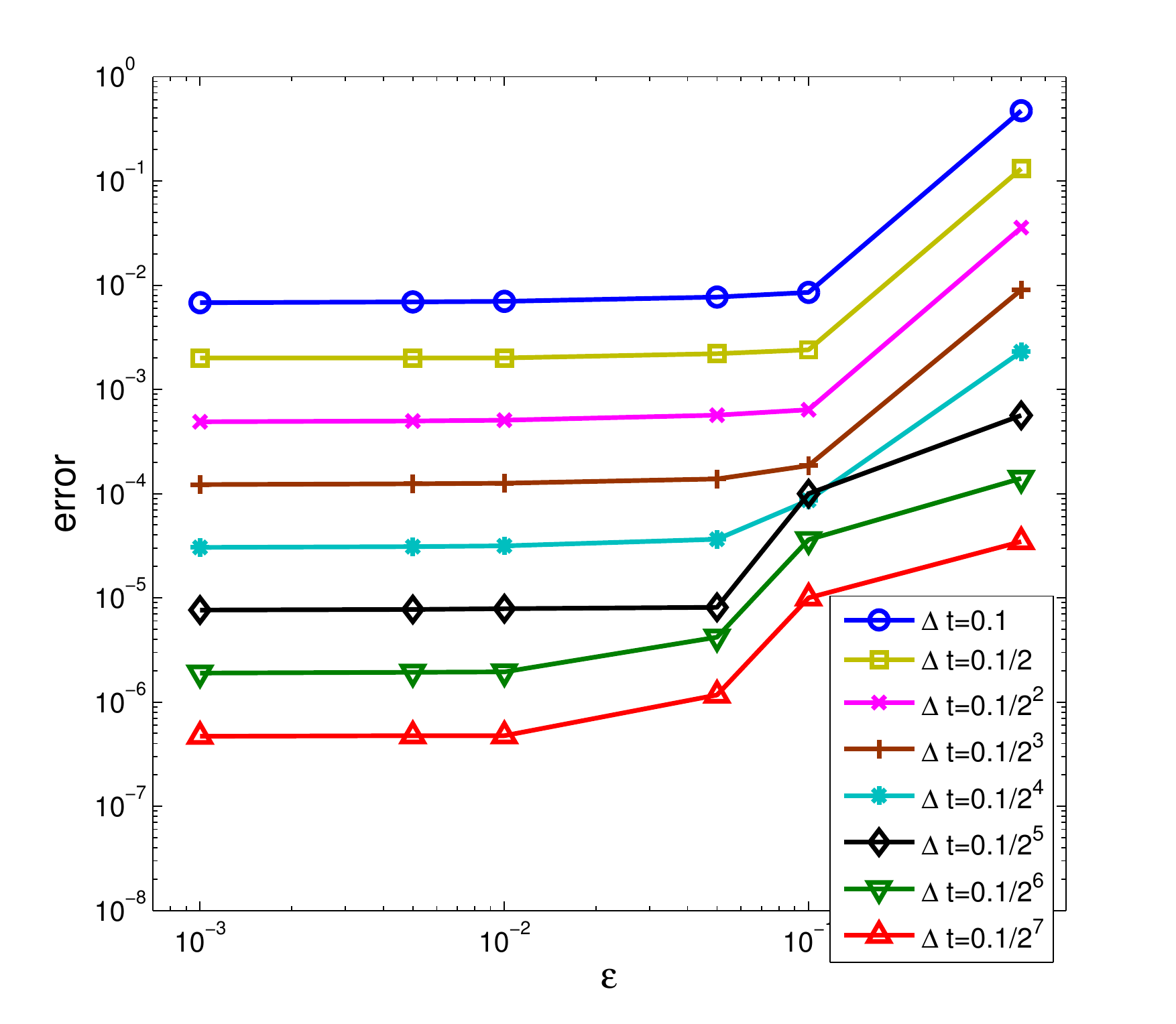,height=4.3cm,width=5.5cm}
\end{array}
$$
\caption{Temporal error of the UA2 method with $U_5^\eps$ in Example III with respect to $\Delta t$ and $\eps$: results of the nonlinear Dirac equation with magnetic potential case.}\label{fig:mnlUA}
\end{figure}

 \begin{figure}[h!]
$$
\begin{array}{cc}
\psfig{figure=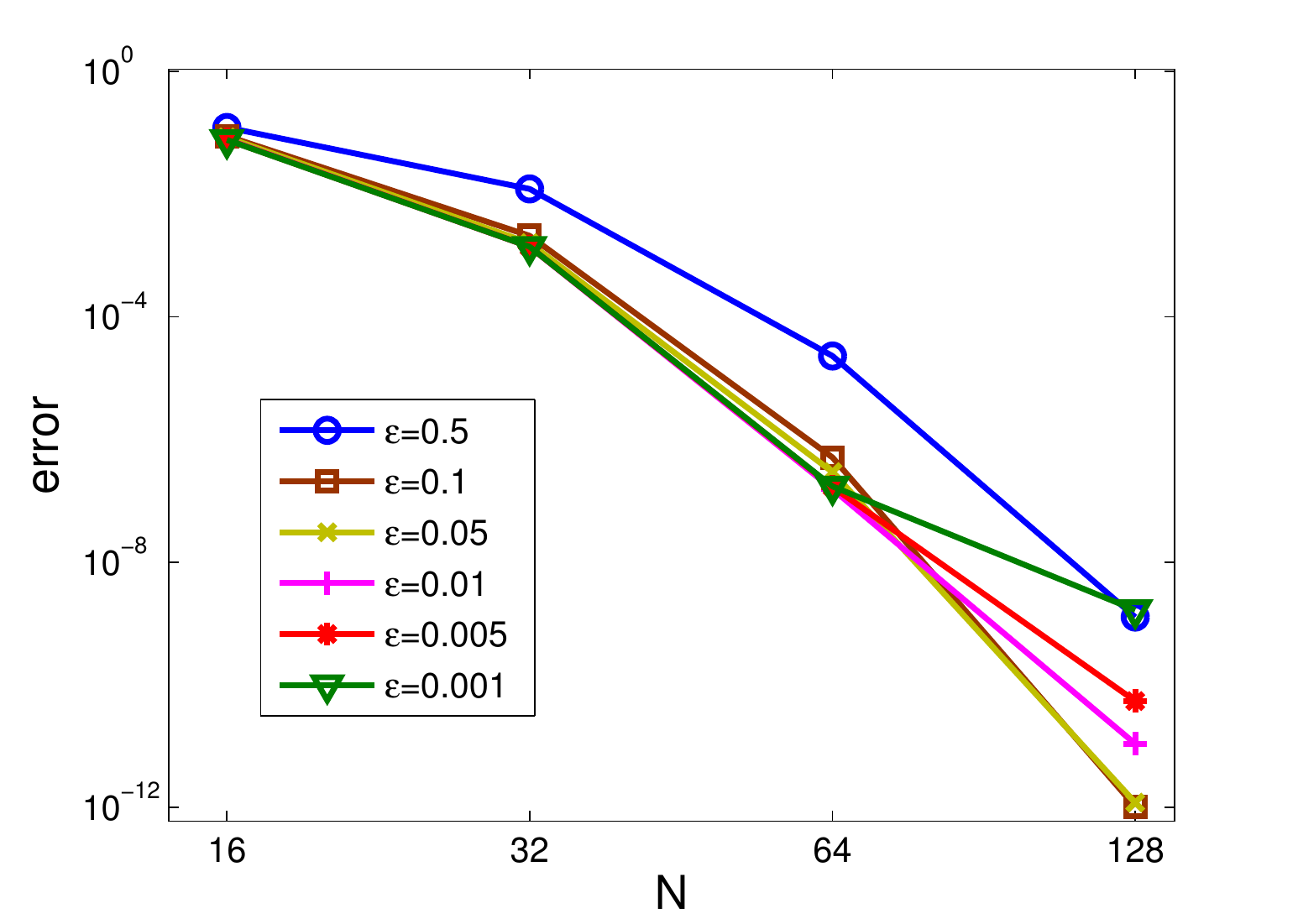,height=4.3cm,width=5.5cm}&\psfig{figure=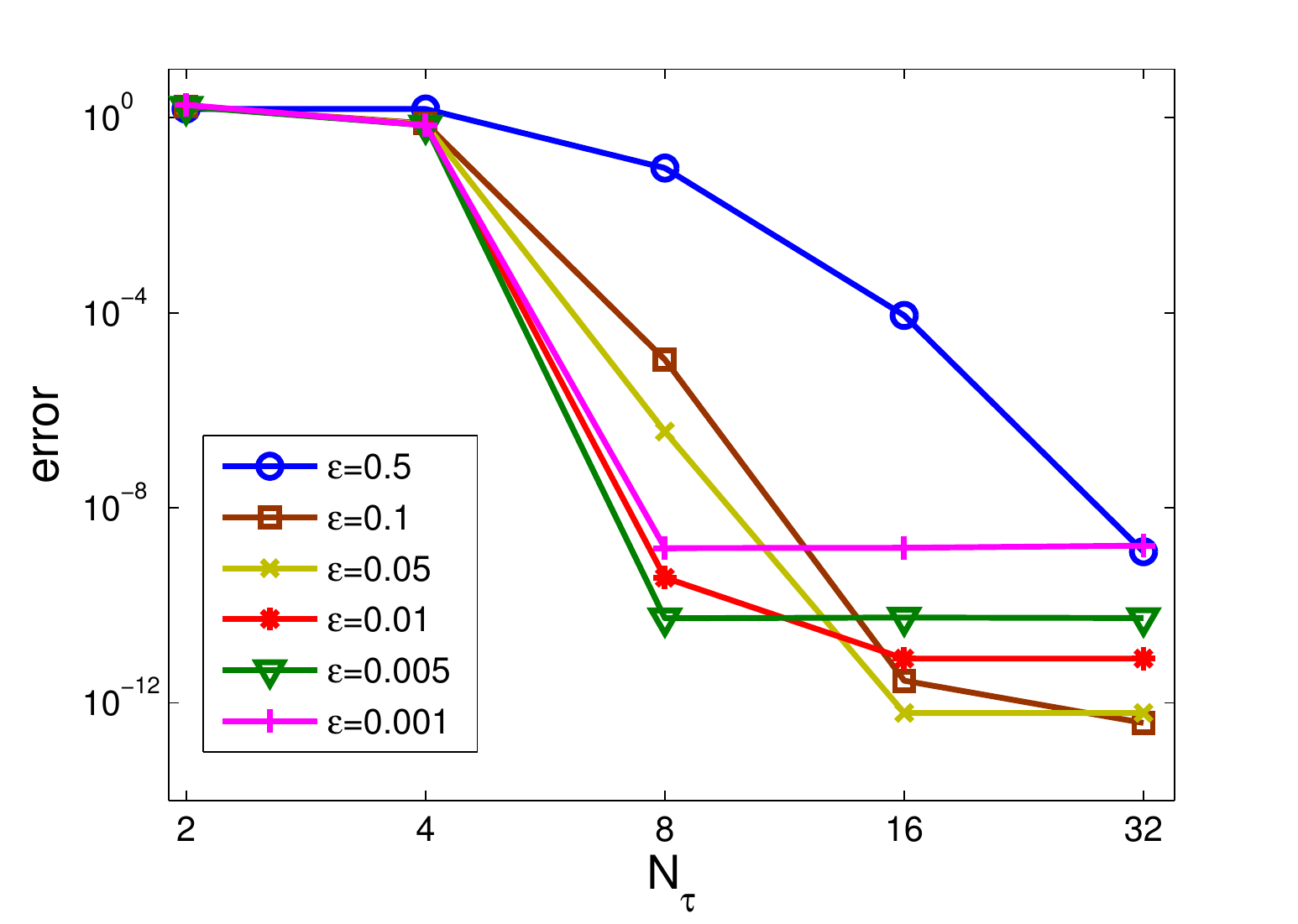,height=4.3cm,width=5.5cm}
\end{array}
$$
\caption{Spatial error of the UA2 method in Example III with respect to $N(=(b-a)/\Delta x)$ and $N_\tau$.}\label{fig:space}
\end{figure}
\begin{figure}[h!]
\centering
\psfig{figure=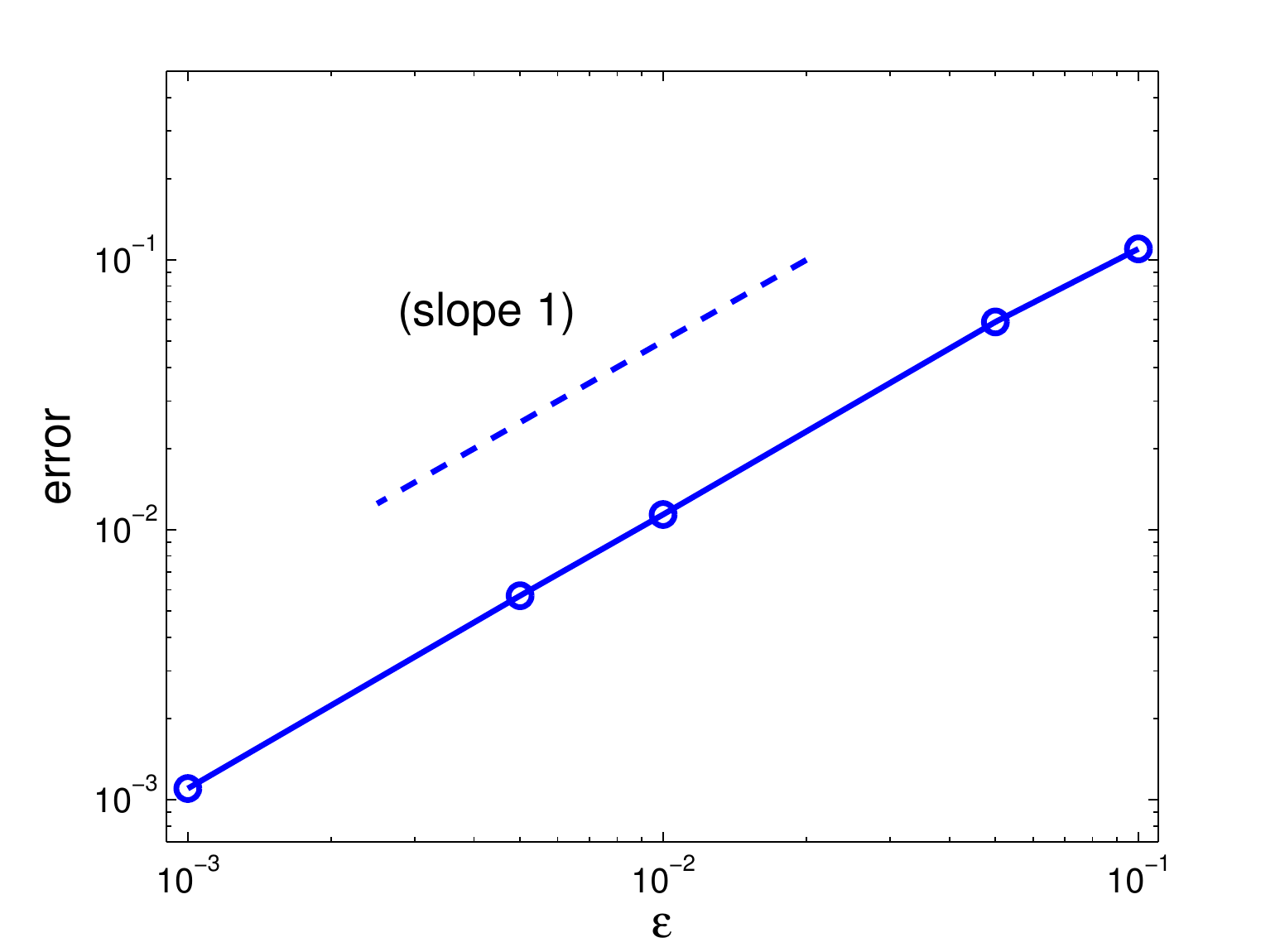,height=4.3cm,width=10cm}
\caption{The maximum error in solution between the nonlinear Dirac equation and the limit model (\ref{lm1}) in Example III.}\label{fig:rate}
\end{figure}

Based on the numerical results, we can have the following observation:

(i) The UA1 scheme with initial data $U_3^\eps$ shows uniformly first order temporal accuracy and the UA2 scheme with $U_5^\eps$ shows uniformly second order temporal accuracy in all the three cases. In space discretization, the UA schemes have uniformly spectral accuracy in both $x$ and the artificial $\tau$.

%(ii) TSFP in nonlinear without magnetic potential case behaves very well in both the classical and limit regimes. The scheme behaves in the asymptotic preserving way as we analyzed. However, TSFP in the linear but with magnetic potential case gives very poor approximation in the limit regime.

(ii) The convergence rate from the nonlinear Dirac equation to the limit model is of order $O(\eps)$, as $\eps\to0$.

\section{Conclusion}\label{sec:conc}
We proposed some uniformly accurately (UA) schemes for solving the nonlinear Dirac equation in the nonrelativistic limit regimes. Our approach is based on a suitable two-scale formulation which offers a general strategy for constructing UA schemes for a class of highly oscillatory problems involving two small scales. We derive correct initial data for this augmented formulation, using a Chapman-Enskog expansion. This allows us to construct a UA scheme with second order accuracy in time and spectral accuracy in space. Numerical tests were done to show the UA property. Our approach can also be applied to solve the oscillatory kinetic equations with diffusion scaling. This the subject of a work in progress.

\section*{Acknowledgements}
This work was supported by the French ANR project MOONRISE ANR-14-CE23-0007-01.
M. Lemou is supported by the Enabling Research EUROFusion project CfP-WP14-ER-01/IPP-03.
We would like to thank the editor and referees for their suggestions to improve the paper.
          %
          % For figures use

          %\begin{figure}

          %The use of .eps files is encouraged, in which case you should
          %un-comment the \uspackage{graphics} command above, and use the
          %command
          %\include{figure.eps}
          % to insert the figure file.

          %\end{figure}

          % BibTeX users please use

          % \bibliographystyle{}

          % \bibliography{}

          %

          % Non-BibTeX users please use

          \end{document}